\tikzset{
  arrows along my path/.style={
    postaction={
      decorate,
      decoration={
        markings,
        mark=between positions 0.03 and 1 step 10pt with {\arrow{Stealth[length=5pt]}},
   }}}}
\providecommand{\U}[1]{\protect\rule{.1in}{.1in}}
\theoremstyle{plain}
\newtheorem{thm}{Theorem}[section]
\newtheorem{cor}[thm]{Corollary}
\newtheorem{lem}[thm]{Lemma}
\newtheorem{prop}[thm]{Proposition}
\newtheorem{con}[thm]{Conjecture}
\theoremstyle{definition}
\newtheorem{defn}{Definition}
\theoremstyle{remark}
\newtheorem{rem}[thm]{Remark}
\newcommand{\C}{\mathbb{C}}
\newcommand{\R}{\mathbb{R}}
\newcommand{\N}{\mathbb{N}}
\newcommand{\calB}{\mathcal{B}}
\newcommand{\calP}{\mathcal{P}}
\newcommand{\calR}{\mathcal{R}}
\newcommand{\rmd}{\mathrm{d}}
\renewcommand{\Re}{{\sf Re}}
\renewcommand{\Im}{{\sf Im}}
\begin{document}
\title{Free self-decomposability and unimodality of \\ the Fuss-Catalan distributions}
\author{Wojciech M{\l}otkowski\thanks{W.~M. is supported by the Polish
National Science Center grant No. 2016/21/B/ST1/00628.} 
\and Noriyoshi Sakuma\thanks{N.~S is supported by JSPS KAKENHI Grant Numbers15K04923, 19K03515.}
\and Yuki Ueda}
\date{\today}
\maketitle

\begin{abstract}
We study properties of the Fuss-Catalan distributions $\mu(p,r)$,
$p\geq1$, $0<r\leq p$: free infinite divisibility,
free self-decomposability, free regularity and unimodality.
We show that the Fuss-Catalan distribution $\mu(p,r)$ is freely
self-decomposable if and only if  $1 \leq p=r \leq 2$.
\end{abstract}

{\bf Keywords}: Fuss-Catalan distributions, free cumulant transform, Voiculescu transform, free L\'{e}vy measures,
free cumulants, free infinite divisibility, free self-decomposability, free $L_1$, free regularity, unimodality.

\section{Introduction}

The \textit{two-parameters Fuss-Catalan numbers} (called also \textit{Raney numbers}) are defined by
\[
A_{k}(p,r):= \frac{r}{k!} \prod_{i=1}^{k-1} (kp +r-i) = \frac{r}{kp+r} \binom{kp+r}{k},
\]
where $p,r$ are real parameters. If $p,r$ are natural numbers then $A_k(p,r)$ may
admit several combinatorial interpretations in terms of plane trees, lattice paths
or noncrossing partitions, see~\cite{concretemathematics}.
In particular, $A_k(2,1)$ is the famous Catalan sequence, see~\cite{stanley2015}.

It is known that the sequence $A_{k}(p,r)$ is positive definite
if and only if either $p\geq1$, $0<r\leq p$ or $p\leq 0$, $p-1\geq r$ or else if $r=0$,
see \cite{Mlot,MP2014,MPZ2013,FL2015,liupego2014} for various proofs.
The corresponding probability measure we will call the \textit{Fuss-Catalan distribution}
and denote $\mu(p,r)$, so that
\[
A_{k}(p,r) =\int_{\R} x^{k}\mu(p,r)(\rmd x).
\]
It is easy to observe that $\mu(p,0)=\delta_{0}$ and that
$\mu(1-p,-r)$ is the reflection of $\mu(p,r)$.
Therefore we will confine ourselves to the case $p\geq1$, $0<r\leq p$.
Then $\mu(p,r)$ is absolutely continuous
(except $\mu(1,1)=\delta_1$) and the support is
$\left[0,p^{p}(p-1)^{1-p}\right]$
for $p>1$ and $[0,1]$ for $p=1$, $0<r<1$.
Forrester and Liu \cite{FL2015} found the following implicit formula for the density function $W_{p,r}(x)$
of $\mu(p,r)$:

\begin{prop}[Proposition 2.1 in \cite{FL2015}]\label{Prop:density}
For $p>1$ put
\begin{equation}\label{eq:propdensityrho}
\rho(\varphi):= \frac{(\sin (p \varphi))^{p}}{\sin(\varphi) (\sin((p-1)\varphi))^{p-1}}, \quad 0<\varphi<\frac{\pi}{p}.
\end{equation}
Then $\rho(\varphi)$ is a decreasing function which maps $(0,\pi/p)$ onto $\left(0,p^{p}(p-1)^{1-p}\right)$
and we have
\begin{equation}\label{eq:propdensitywpr}
W_{p,r}(\rho(\varphi)) =
\frac{(\sin((p-1)\varphi))^{p-r-1}\sin(\varphi)\sin(r \varphi)}{\pi (\sin (p \varphi))^{p-r}} , \quad 0<\varphi<\frac{\pi}{p}.
\end{equation}
\end{prop}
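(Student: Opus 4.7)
The plan is to derive $W_{p,r}$ by Stieltjes inversion of the Cauchy transform of $\mu(p,r)$, after expressing the latter through the Fuss–Catalan generating function. Let
\[
B_p(z) := \sum_{k\ge 0} A_k(p,1)z^k.
\]
Lagrange inversion applied to the inverse $z(w) = (w-1)/w^p$ yields both the functional equation $B_p(z) = 1 + zB_p(z)^p$ and the power identity $B_p(z)^r = \sum_{k\ge 0} A_k(p,r)z^k$ (using $A_n(p,r) = (r/n)\binom{np+r-1}{n-1}$). Combined with the moment expansion of $\mu(p,r)$, this gives
\[
G_{\mu(p,r)}(z) = \frac{B_p(1/z)^r}{z}, \qquad |z| > p^p(p-1)^{1-p},
\]
which extends analytically to the complement of the support $[0,p^p(p-1)^{1-p}]$.

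The core step is parameterizing the boundary trace of $w := B_p(1/z)$. Writing $w = a e^{i\varphi}$ with $a>0$ and demanding that
\[
\frac{1}{z} = \frac{w-1}{w^p} = a^{-p}\bigl((a\cos\varphi-1) + ia\sin\varphi\bigr)e^{-ip\varphi}
\]
be real, the vanishing of the imaginary part reduces to the single equation $a\sin((p-1)\varphi) = \sin(p\varphi)$, hence $a = a(\varphi) = \sin(p\varphi)/\sin((p-1)\varphi)$. Substituting back gives
\[
\frac{1}{z} = \frac{\sin(\varphi)(\sin((p-1)\varphi))^{p-1}}{(\sin(p\varphi))^p} = \frac{1}{\rho(\varphi)}.
\]
For $\varphi\in(0,\pi/p)$, $w$ lies in the upper half-plane and is the boundary value of $B_p$ from $\Im z < 0$, whereas its conjugate $\bar w = a(\varphi)e^{-i\varphi}$ is the boundary value from $\Im z > 0$. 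Hence $G(\rho(\varphi)+i0) = \rho(\varphi)^{-1}a(\varphi)^r e^{-ir\varphi}$, and the Stieltjes inversion formula $W_{p,r}(x) = -\pi^{-1}\Im G(x+i0)$ produces
\[
W_{p,r}(\rho(\varphi)) = \frac{(\sin((p-1)\varphi))^{p-r-1}\sin(\varphi)\sin(r\varphi)}{\pi(\sin(p\varphi))^{p-r}}.
\]

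Separately, the endpoints $\rho(0^+) = p^p(p-1)^{1-p}$ and $\rho((\pi/p)^-) = 0$ follow from the Taylor expansion $\sin(k\varphi)\sim k\varphi$ at $\varphi=0$ and from $\sin(p\cdot\pi/p)=0$ with $\sin((p-1)\pi/p)=\sin(\pi/p)\ne 0$; strict monotonicity reduces to the negativity of the logarithmic derivative
\[
\frac{\rho'(\varphi)}{\rho(\varphi)} = p^2\cot(p\varphi) - \cot(\varphi) - (p-1)^2\cot((p-1)\varphi)
\]
on $(0,\pi/p)$, a short trigonometric estimate. The principal technical obstacle I anticipate is confirming that $\varphi\mapsto a(\varphi)e^{i\varphi}$ is genuinely the continuous boundary trace of the branch of $B_p$ pinned by $B_p(0)=1$. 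This is handled by the implicit function theorem applied to $F(w,z) := w - 1 - zw^p$: the derivative $\partial F/\partial w = 1 - pzw^{p-1}$ vanishes only at $w = p/(p-1)$, $z = (p-1)^{p-1}/p^p$, which is exactly the reciprocal of the right edge of the support, so $B_p$ extends analytically across both sides of the spectral interval and the parameterized curve agrees with the correct boundary values by continuity from $\varphi=0$, where $a(0)=p/(p-1)$. With this identification in place, Stieltjes inversion applies without further work.
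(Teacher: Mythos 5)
The paper itself offers no proof of this proposition --- it is imported verbatim from Forrester--Liu \cite{FL2015} --- so there is no in-paper argument to compare against; judged on its own, your derivation is the standard one behind that reference and its computations check out. The relations $B_p(z)=1+zB_p(z)^p$ and $B_p(z)^r=\sum_{k\ge0}A_k(p,r)z^k$ do give $G_{\mu(p,r)}(z)=B_p(1/z)^r/z$ for $|z|>p^p(p-1)^{1-p}$; killing the imaginary part of $(w-1)/w^p$ with $w=ae^{i\varphi}$ indeed forces $a=\sin(p\varphi)/\sin((p-1)\varphi)$, the surviving real part is exactly $1/\rho(\varphi)$, and Stieltjes inversion then yields \eqref{eq:propdensitywpr}. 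Note that the exact attribution of which half-plane produces $ae^{i\varphi}$ versus $ae^{-i\varphi}$ is harmless: the two boundary values of $G$ at $x=\rho(\varphi)$ are complex conjugates and $\sin(r\varphi)>0$ on $(0,\pi/p)$ for $0<r\le p$, so the density is $\pi^{-1}\lvert\Im G(x\pm i0)\rvert$ either way.

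Two steps are asserted rather than carried out, and you should fill them in. First, the monotonicity of $\rho$, which is part of the statement: the inequality $p^2\cot(p\varphi)<\cot\varphi+(p-1)^2\cot((p-1)\varphi)$ on $(0,\pi/p)$ is true but not a one-line estimate; a clean route is $x\cot x=1-2\sum_{n\ge1}x^2/(n^2\pi^2-x^2)$, after which the terms linear in the argument cancel and the claim reduces to superadditivity of $t\mapsto t^3/(s^2-t^2)$ on $(0,s)$ with $s=n\pi/\varphi>p$, which holds because that function is strictly convex and vanishes at $0$. Second, the branch identification: your implicit-function-theorem idea is right, but it needs the explicit observation that along the parametrized curve $w=a(\varphi)e^{i\varphi}$ one has $w\neq p/(p-1)$ (the unique simultaneous zero of $F$ and $\partial_wF$ is $(w,1/z)=(p/(p-1),(p-1)^{p-1}p^{-p})$), so $\partial_wF\neq0$ at every point of the curve, and then a continuity/connectedness argument from $\varphi\to0^+$, where $B_p$ attains the value $p/(p-1)$ at the edge of its disc of convergence, identifies the curve with the boundary trace of the correct branch. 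With these two points supplied, the proof is complete and coincides in spirit with the derivation in the cited source.
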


For $p=1$, $0<r<1$ the density function $W_{1,r}(x)$ is given in \cite[formula (5.2)]{Mlot} (or see Proposition \ref{prop:W_1,r}).

We have therefore
\begin{equation}\label{eg:akprintegral}
A_k(p,r)=\int_{0}^{p^p(p-1)^{1-p}} x^k W_{p,r}(x)\,\rmd x
\end{equation}
for $k\ge0$, $p>1$, $0<r\leq p$. This formula is still valid for $r>p>1$, however in this case $W_{p,r}(x)$
is negative on some subinterval of $\left(0,{p^p(p-1)^{1-p}}\right)$.
Another description of the density function $W_{p,r}$, in terms of the Meijer functions,
for rational $p>1$, was provided in \cite{MPZ2013}.

It was proved in \cite{Mlot} that the free cumulant sequence $\{r_{k}(\mu(p,r))\}_{k=1}^{\infty}$
of $\mu(p,r)$ is given by
\begin{equation}\label{freecumulants of mu(p,r)}
r_{k}(\mu(p,r))=A_{k}(p-r,r).
\end{equation}
Consequently, if $0<r\leq\min\{p-1,p/2\}$ then $\mu(p,r)$ is freely infinitely divisible,
i.e infinitely divisible with respect to the additive free convolution $\boxplus$.
Here we will show in addition that $\mu(p,p)$ is freely infinitely divisible
for $1\leq p\leq2$.

\subsection{Main results}
In this paper we study the Fuss-Catalan distributions $\mu(p,p)$ and $\mu(p,r)$ in the framework of the free probability theory.
In particular we are interested in free infinite divisibility, free self-decomposability, free $L_1$ property,
free regularity and also unimodality.
First we briefly recall these concepts.
In Section~3 we concentrate on the distributions $\mu(p,p)$. We will prove:

\begin{thm} For the Fuss-Catalan distribution $\mu(p,p)$ we have the following:
\begin{enumerate}
\item $\mu(p,p)$ is freely infinitely divisible if and only if $1\leq p \leq 2$.
\item If $1\leq p \leq 2$ then $\mu(p,p)$ is freely self-decomposable, more precisely, it is in the free $L_1$ class.
\item $\mu(p,p)$ is not free regular for any $1<p<2$.
\item $\mu(p,p)$ is unimodal for all $p\geq 1$.
\end{enumerate}
\end{thm}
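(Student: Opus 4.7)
The key identity underlying parts (1) and (2) is that (\ref{freecumulants of mu(p,r)}) yields $r_k(\mu(p,p)) = A_k(0, p) = \binom{p}{k}$, whence $G_{\mu(p,p)}^{-1}(z) = (1+z)^p/z$ and
\begin{equation*}
\varphi_{\mu(p,p)}(z) = \frac{(z+1)^p - z^p}{z^{p-1}} = p \int_0^1 \left(1 + \frac{t}{z}\right)^{p-1} dt,
\end{equation*}
the second equality coming from $(z+1)^p - z^p = p\int_0^1 (z+t)^{p-1}\,dt$ together with dividing by $z^{p-1}$ under principal branches.

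\textbf{Part (1).} For $z \in \C^+$ and $t > 0$, $1 + t/z \in \C^-$ with $\arg(1+t/z) \in (-\pi, 0)$. If $p-1 \in [0,1]$ the principal branch $(1+t/z)^{p-1}$ has argument in $(-(p-1)\pi, 0) \subseteq (-\pi, 0)$ and so lies in $\C^- \cup \R$; integrating against the positive measure $dt$ preserves this, so $\Im \varphi_{\mu(p,p)}(z) \leq 0$ on $\C^+$, verifying the Bercovici-Voiculescu criterion for free infinite divisibility. For $p > 2$ I locate a failure near the origin: pick $\theta \in (\pi/(p-1), \pi)$ so that $(p-1)\theta \in (\pi, 2\pi)$, which is possible exactly because $p-1 > 1$. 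For $z = re^{i\theta}$ and $r \to 0^+$, the leading term $\varphi(z) \sim z^{-(p-1)}$ gives $\Im \varphi(z) \sim -r^{-(p-1)} \sin((p-1)\theta) > 0$, contradicting the criterion.

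\textbf{Part (2).} Apply Stieltjes inversion to the integral representation. For $u > 0$ or $u < -1$ the integrand is real, so $\Im \varphi(u + i 0^+) = 0$; for $u \in (-1, 0)$ the branch cut is crossed on the sub-interval $t \in (|u|, 1)$, and a short change of variables gives
\begin{equation*}
\Im \varphi(u + i0^+) = -p \sin((p-1)\pi) \int_{|u|}^{1} \left|1 + \frac{t}{u}\right|^{p-1} dt = -\sin((p-1)\pi)\, \frac{(1-|u|)^p}{|u|^{p-1}}.
\end{equation*}
This identifies the free L\'evy measure as absolutely continuous on $(-1, 0)$ with a density of the form $k(x)/|x|$, where $k(x)$ is monotone of the shape required for free self-decomposability (it is increasing on $(-1,0)$), and the further integrability needed to place $\mu(p,p)$ in the free $L_1$ class is immediate from the explicit formula.

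\textbf{Parts (3) and (4).} Part (3) is immediate: $r_3 = p(p-1)(p-2)/6 < 0$ for $p \in (1,2)$, and any compactly supported free regular measure on $[0,\infty)$ has all free cumulants non-negative. For (4), in the range $1 \leq p \leq 2$ unimodality follows from part (2) combined with the Hasebe-Sakuma theorem that freely self-decomposable measures are unimodal. For $p > 2$ one must work directly from Proposition \ref{Prop:density}, which with $r = p$ becomes $W_{p,p}(\rho(\varphi)) = \sin\varphi \sin(p\varphi)/(\pi \sin((p-1)\varphi))$ on $(0, \pi/p)$; since $\rho$ is strictly decreasing, unimodality of $W_{p,p}$ reduces to unimodality in $\varphi$ of this one-variable function. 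It vanishes at both endpoints, and its logarithmic derivative $\cot\varphi + p\cot(p\varphi) - (p-1)\cot((p-1)\varphi)$ runs from $+\infty$ at $0$ to $-\infty$ at $\pi/p$, so it has at least one zero. The main technical obstacle is uniqueness of the zero; the natural route is the Mittag-Leffler expansion $\cot x = 1/x + \sum_{n \geq 1} 2x/(x^2 - n^2\pi^2)$, which reorganises the three cotangent terms into an absolutely convergent series whose strict monotonicity in $\varphi$ can be deduced term by term.
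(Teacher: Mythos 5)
Your parts (1) and (3), and the self-decomposability half of (2), are correct but follow a genuinely different route from the paper. The paper works combinatorially: it proves positive definiteness of $\bigl\{\binom{p}{n+2}\bigr\}$ and $\bigl\{(n+2)\binom{p}{n+2}\bigr\}$ via Beta-integral representations (Propositions \ref{prop:binom p,n+2} and \ref{prop:binom n+2,p,n+2}) and invokes the cumulant criteria (Remark \ref{PD}, Proposition \ref{sd_vs_cumulants}), while the Lévy measure of $\mu(p,p)$ is obtained through a hypergeometric identity (Lemma \ref{lem:Gaussformula}, Proposition \ref{prop:LK p,p}). You instead extend the Voiculescu transform $\varphi_{\mu(p,p)}(z)=((z+1)^p-z^p)/z^{p-1}$ to $\C^+$, check its range for $1\le p\le 2$, exhibit a blow-up with $\Im\varphi>0$ near $0$ for $p>2$ (Theorem \ref{BV_char_of_FID}), and recover the Lévy density by Stieltjes inversion; your boundary-value computation reproduces exactly the density $-\frac{\sin(p\pi)}{\pi|x|}\bigl(\frac{1+x}{-x}\bigr)^p\mathbf{1}_{(-1,0)}(x)$ of Remark \ref{rem:LK p,p}. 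This is an attractive shortcut (in particular your non-regularity argument via $r_3(\mu(p,p))=\binom{p}{3}<0$ together with Theorem \ref{thm:AHS13} is simpler than the paper's), though you should say a word on why the inversion captures the whole free Lévy measure, i.e.\ why $\sigma$ in \eqref{eqno1a} has no singular part on $(-1,0)$ and no mass escapes to $\{0\}$ or $\{-1\}$; the paper sidesteps this by verifying the candidate representation directly and appealing to uniqueness of the triplet.

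There are, however, two genuine gaps. First, the free $L_1$ assertion in (2) is not proved: free $L_1$ is not an integrability condition on the Lévy measure, and nothing in your explicit formula makes it ``immediate''. By definition one must show that for every $c\in(0,1)$ the cofactor $\rho_{p,c}$ in \eqref{rho_p,c} is itself freely self-decomposable; its Lévy density is the difference $-\frac{\sin(p\pi)}{\pi|x|}\bigl\{\bigl(\frac{1+x}{-x}\bigr)^p\mathbf{1}_{(-1,0)}(x)-\bigl(\frac{c+x}{-x}\bigr)^p\mathbf{1}_{(-c,0)}(x)\bigr\}$, and the paper's Theorem \ref{thm:free L1 (p,p)} proves monotonicity of the corresponding $k_{p,c}$ on $(-1,0)$, the key point being $(1+x)^{p-1}-c(c+x)^{p-1}>0$ on $(-c,0)$; some such argument is indispensable and is missing from your proposal. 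Second, in (4) for $p>2$ the crux is precisely the step you defer: uniqueness of the zero of $\cot\varphi+p\cot(p\varphi)-(p-1)\cot((p-1)\varphi)$. Your Mittag--Leffler suggestion does not work literally ``term by term'', since the terms coming from $-(p-1)\cot((p-1)\varphi)$ are individually increasing; one must group the three series by the index $n$ and check that, for each $n$, the derivative contribution of the $p\cot(p\varphi)$ term dominates that of the $(p-1)\cot((p-1)\varphi)$ term, e.g.\ by showing $a\mapsto a^2(a^2\varphi^2+n^2\pi^2)/(a^2\varphi^2-n^2\pi^2)^2$ is increasing for $0<a\varphi<n\pi$. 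This can be carried out and would give a proof (in fact one differing from the paper's, which analyses $\sin^2(p\varphi)-p\sin^2\varphi$ in Proposition \ref{prop:unimodal p,p}), but as written the decisive inequality is only asserted. Minor points: the unimodality of freely self-decomposable laws is due to Hasebe--Thorbj{\o}rnsen \cite{HT}, not ``Hasebe--Sakuma'', and the trivial cases $p=1,2$ in (2) should be noted separately since your Lévy density vanishes there.
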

In Section 4 we obtain some results Fuss-Catalan distribution $\mu(p,r)$ in general:

\begin{thm}\label{thm:main}
Suppose that $p\geq 1$ and $0<r\leq p$. For the Fuss-Catalan distribution $\mu(p,r)$ we have the following:
\begin{enumerate}
\item $\mu(p,r)$ is freely infinitely divisible if and only if either $0<r\leq \min\{p/2,p-1\}$ or $1\leq p=r \leq 2$.
\item $\mu(p,r)$ is freely self-decomposable if and only if $1\leq p=r \leq 2$.
\item $\mu(p,r)$ is free regular if and only if either $0<r\leq \min\{p/2,p-1\}$ or $p=r\in\{1,2\}$.
\end{enumerate}
\end{thm}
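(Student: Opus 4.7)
The plan is to reduce to Theorem~1.1 on the diagonal $p=r$ and, for $r<p$, to analyse the free L\'evy--Khintchine representation of $\mu(p,r)$ via the cumulant identity $r_k(\mu(p,r))=A_k(p-r,r)$ from~\eqref{freecumulants of mu(p,r)}.

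\emph{If directions.} In~(1), the diagonal case is Theorem~1.1(1); for $r<p$ with $0<r\le\min\{p/2,p-1\}$, the pair $(p-r,r)$ lies in the Fuss--Catalan positivity region, so $\{A_k(p-r,r)\}_{k\ge1}$ is the moment sequence of $\mu(p-r,r)$ on $[0,\infty)$ and the free compound Poisson construction yields FID. Matching $r_k=\int x^{k-2}(1+x^2)\,\rmd\sigma$ for $k\ge2$ against $\int x^k\,\rmd\mu(p-r,r)$ by polynomial orthogonality on compact support identifies the Bercovici--Voiculescu measure as $\sigma(\rmd x)=x^2 W_{p-r,r}(x)/(1+x^2)\,\rmd x$ with drift $\eta=r$; this $\sigma$ is supported on $[0,\infty)$ and the drift inequality $\eta\ge\int x/(1+x^2)\,\rmd\sigma$ follows from $x^3/(1+x^2)^2\le x$, giving the \emph{if} direction of~(3) on this region. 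The remaining boundary diagonal points in~(3) are checked directly ($\mu(1,1)=\delta_1$; $\mu(2,2)$ is a rescaled free Poisson). The \emph{if} direction of~(2) is Theorem~1.1(2).

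\emph{Only if directions.} On the diagonal $p=r$, all three non-inclusions follow from Theorem~1.1. For $r<p$: in~(2), since FSD implies FID, we may restrict to $0<r\le\min\{p/2,p-1\}$. The corresponding classical-type free L\'evy measure is $\nu(\rmd x)=((1+x^2)/x^2)\sigma(\rmd x)=W_{p-r,r}(x)\,\rmd x$, and the standard FSD criterion requires its density to have the form $k(x)/x$ with $k$ monotonically decreasing on $(0,L)$, i.e., $k(x)=xW_{p-r,r}(x)$ must decrease on the support. But Proposition~\ref{Prop:density} together with the endpoint asymptotics $\varphi\to 0^+,\,\pi/(p-r)^-$ in the parametrisation $\rho$ shows $k(0^+)=k(L^-)=0$ with $k>0$ in between, so $k$ is not monotone; hence FSD fails for every $r<p$. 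Part~(3) \emph{only if} for $r<p$ reduces to part~(1) and Theorem~1.1(3).

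\emph{Main obstacle.} The principal new technical step is part~(1) \emph{only if} for $r<p$ with $r>\min\{p/2,p-1\}$, where we must show the candidate free L\'evy measure fails to be positive. Case~1: $p-r\ge 1$ and $r>p-r$ (so $p>2$ and $p/2<r\le p-1$). Here formula~\eqref{eq:propdensitywpr} at parameters $(p-r,r)$ contains a factor $\sin(r\varphi)$ which changes sign on $(0,\pi/(p-r))$ since $r>p-r$, making $W_{p-r,r}$ signed, so $\sigma$ is signed and FID is ruled out. Case~2: $p-r<1$ (i.e., $r>p-1$, covering $p=1$, $1<p<2$, and the subregion $p-1<r<p$ of $p\ge 2$). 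From $A_k(p-r,r)=(r/k!)\prod_{i=1}^{k-1}(k(p-r)+r-i)$ the factor $k(p-r)+r-i$ is negative for $i>k(p-r)+r$, and since $p-r<1$ the number of such factors grows linearly in $k$; a parity argument exhibits an even $k$ with an odd count of negative factors, giving $A_k(p-r,r)<0$ and contradicting the positivity $r_k=\int x^{k-2}(1+x^2)\,\rmd\sigma\ge 0$ required for any FID representation. The delicate point is to control this parity uniformly across the failure region.
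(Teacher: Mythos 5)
Your architecture is essentially the paper's own: diagonal cases from Theorem~1.1, the ``if'' directions via the compound Poisson picture $r_k(\mu(p,r))=A_k(p-r,r)=\int x^k W_{p-r,r}(x)\,dx$, free self-decomposability ruled out through the criterion that $k_{p,r}(x)=xW_{p-r,r}(x)$ must be non-increasing on $(0,\infty)$, and free regularity read off from the explicit L\'evy--Khintchine form. The decisive gap is in part~(1), only-if, your Case~2 ($p-1<r<p$): you assert that ``a parity argument exhibits an even $k$ with an odd count of negative factors'' but never give it, and you yourself flag controlling the parity ``uniformly across the failure region'' as the delicate point. That is exactly the hard step. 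The paper resolves it by putting $u=r+1-p\in(0,1)$, noting $A_n(p-r,r)\le 0$ precisely when $nu-r>1$ and $\lfloor nu-r\rfloor$ is odd, and then analysing the set $\{(nu-r)\bmod 2: n \text{ even},\ nu>r+1\}$: for rational $u$ it is a coset of a nontrivial finite subgroup of $[0,2)$ and hence meets $[1,2)$; for irrational $u$ it is dense in $[0,2)$. Without an equidistribution/group argument of this kind your Case~2 is a claim, not a proof.

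Case~1 also has problems. The inference ``$W_{p-r,r}$ signed, so $\sigma$ is signed and FID is ruled out'' is not valid as stated: if $\mu(p,r)$ is not FID there is no $\sigma$ to be signed; what must be shown is that the sequence $\{A_{n+2}(p-r,r)\}_{n\ge0}$ is not positive definite, i.e.\ that no \emph{positive} measure can reproduce the moments of the compactly supported signed measure $x^2W_{p-r,r}(x)\,dx$ --- a determinacy argument which the paper outsources to Lemma~7.1 of \cite{MP2014}. Moreover your Case~1 includes the boundary $r=p-1$, $p>2$, where your justification breaks down: formula \eqref{eq:propdensitywpr} requires the first parameter to exceed $1$, whereas here $p-r=1$; in fact the cumulants $A_n(1,p-1)=\binom{n+p-2}{n}$ are positive and unbounded, so no compactly supported signed density exists at all, and this line needs a separate argument (the paper uses the single computation $A_2A_4-A_3^2=\tfrac{1}{144}p^2(p-1)^2(p+1)(2-p)<0$ for $p>2$). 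Two smaller slips: in part~(2) the claim $k(L^-)=0$ is false when $p-r=1$ (there $W_{1,r}$ blows up at the right endpoint), though harmless since $k(0^+)=0$ together with $k>0$ already contradicts monotonicity; and in part~(3) the drift in the truncated triplet is not $\eta=r$ once the L\'evy measure charges $(1,L]$, though the conclusion you need (nonnegative drift in the representation of Theorem~\ref{thm:AHS13}; it is in fact $0$) still holds.
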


Furthermore, we study the unimodality for the Fuss-Catalan distributions $\mu(p,p-1)$, $\mu(2r,r)$, $\mu(1,r)$ and $\mu(2,r)$.

In this paper we will denote by $\calP(I)$ and $\calB(I)$ the family of all Borel probability measures
and the class of all Borel sets on $I\subseteq\mathbb{R}$.
We will denote $\C^+$ (resp.\ $\C^-$) the set of complex numbers with strictly positive (resp.\ strictly negative) imaginary part.

\section{Preliminaries in the free probability theory}
\subsection{Freely infinitely divisible distributions}

The notion of the free infinite divisibility is an important research area.
One reason is the Berovici-Pata map, which is a bijection
which maps classical infinitely divisible distributions onto
free infinitely divisible ones.

A probability measure $\mu$ on $\R$ is called \emph{freely infinitely divisible} if for any $n\in\N$ there exists a probability measure $\mu_{n}\in\calP(\R)$ such that
\begin{align*}
\mu = \underbrace{\mu_{n}\boxplus \dots \boxplus \mu_{n}}_{\text{$n$ times}},
\end{align*}
where $\boxplus$ denotes the {\it free additive convolution} which can be defined
as the distribution of sum of freely independent selfadjoint operators.
In this case, $\mu_n\in\calP(\R)$ is uniquely determined for each $n\in\N$.
The freely infinite divisible distributions
can be characterized as those admitting a L{\'e}vy-Khintchine representation in terms of $R$-transform which is the free analog of the cumulant transform $C_{\mu}(z) := \log (\widehat{\mu}(z))$, where $\widehat{\mu}$ is the characteristic function of $\mu$.
This was originally established by
Bercovici and Voiculescu in \cite{BeVo1993}.
To explain it, we gather analytic tools for free additive convolution $\boxplus$. On order to define
the \emph{$R$-transform} (or \emph{free cumulant transform}) $R_\mu$ of a (Borel-) probability measure
$\mu$ on $\R$ first we need to define its
Cauchy-Stieltjes transform $G_\mu$:
\begin{equation*}
G_\mu(z)=\int_{\R}\frac{1}{z-t}\,\mu(\rmd t), \qquad(z\in\C^+).
\end{equation*}
Note in particular that $\Im(G_\mu(z))<0$ for any $z$ in $\C^+$, and
hence we may consider the reciprocal Cauchy transform
$F_\mu\colon\C^{+}\to\C^{+}$ given by $F_{\mu}(z)=1/G_{\mu}(z)$.
For any probability measure $\mu$ on $\R$ and any $\lambda$ in
$(0,\infty)$ there exist
positive numbers $\alpha,\beta$ and $M$ such that $F_{\mu}$ is univalent
on the set $\Gamma_{\alpha,\beta}:=\{z \in \C^{+} \,|\, \Im(z) >\beta,
|\Re(z)|<\alpha \Im(z)\}$ and such that
$F_{\mu}(\Gamma_{\alpha,\beta})\supset\Gamma_{\lambda,M}$.
Therefore the right inverse $F^{-1}_{\mu}$ of $F_{\mu}$ exists on
$\Gamma_{\lambda,M}$, and the free cumulant transform
$ R_\mu$ is defined by
\begin{align*}
 R_{\mu}(w) =wF^{-1}_{\mu}(1/w)-1, \quad\text{for all $w$ such that
  $1/w \in \Gamma_{\lambda,M}$}.
\label{def_Cmu_eq}
\end{align*}
The name refers to the fact that $ R_\mu$ linearizes free additive
convolution (cf.\ \cite{BeVo1993}). Variants of $ R_\mu$ (with the
same linearizing property) are the $R$-transform $\calR_\mu$ and the
Voiculescu transform $\varphi_\mu$ related by the following equalities:
\begin{equation*}
 R_\mu(w)=w\calR_\mu(w)=w\varphi_\mu(\tfrac{1}{w}).
\label{relations_eq}
\end{equation*}

The free version of the L\'evy-Khintchine representation now amounts
to the statement that a probability measure $\mu$ on $\R$
is freely infinitely divisible if and only if there exist $a\ge 0$,
$\eta\in\R$ and a L{\'e}vy measure\footnote{A (Borel-) measure $\nu$
  on $\R$ is called a L\'evy measure, if $\nu(\{0\})=0$ and
  $\int_{\R}\min\{1,x^2\}\,\nu(\rmd x)<\infty$.}
$\nu$ such that
\begin{align}
 R_{\mu}(w) = a w^{2}+\eta w +
\int_{\R}\left(\frac{1}{1- w x}-1-w x \mathbf{1}_{[-1,1]}(x)\right)\nu(\rmd x)\qquad(w\in\C^-).
\label{eqno1}
\end{align}
The triplet $(a,\eta,\nu)$ is uniquely determined and referred to as
the \emph{free characteristic triplet} for $\mu$, and $\nu$ is referred to as the \emph{free L\'evy measure} for $\mu$. In terms of the Voiculescu
transform $\varphi_\mu$ the free L\'evy-Khintchine representation takes
the form:
\begin{equation}
\label{eqno1a}
\varphi_{\mu}(z)=\gamma+\int_{{\mathbb R}}\frac{1+tz}{z-t}\, \sigma(\rmd t),
\qquad (z\in{\mathbb C}^+),
\end{equation}
where the \emph{free generating pair} $(\gamma,\sigma)$ is uniquely
determined and related to the free characteristic triplet by the
formulas\cite{BNT06}:
\begin{align*}
\begin{cases}
\nu({\rm d}t)=\frac{1+t^2}{t^2}\cdot \mathbf{1}_{{\mathbb R}\setminus\{0\}}(t) \
\sigma({\rm d}t),\\
\eta=\gamma+\int_{{\mathbb R}}t\Big(\mathbf{1}_{[-1,1]}(t)-\frac{1}{1+t^2}\Big) \
\nu({\rm d}t), \\
a=\sigma(\{0\}).
\end{cases}
\end{align*}
In particular $\sigma$ is a finite measure.
The right hand side of \eqref{eqno1a} gives rise to an analytic
function defined on all of $\C^+$, and in fact the property that
$\varphi_\mu$ can be extended analytically to all of $\C^+$ also
characterizes the measures in the class of freely infinitely divisible distributions. More precisely Bercovici and Voiculescu proved in \cite{BeVo1993} the following
fundamental result:

\begin{thm}
A probability measure $\mu$ on $\R$ is freely infinitely divisible if and only if
the Voiculescu transform $\varphi_{\mu}$ has an analytic extension
defined on $\C^{+}$ with values in $\C^{-}\cup \R$.
\label{BV_char_of_FID}
\end{thm}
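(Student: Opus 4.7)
The plan is to prove the biconditional by passing through the free Lévy–Khintchine representation~(\ref{eqno1a}); the forward direction is then a direct computation, while the converse rests on the classical Herglotz–Nevanlinna representation plus a growth estimate at infinity.

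\textbf{Forward direction.} Assuming $\mu$ is freely infinitely divisible, the free Lévy–Khintchine form supplies $\gamma \in \R$ and a finite positive Borel measure $\sigma$ on $\R$ with
$$
\varphi_\mu(z) \;=\; \gamma + \int_\R \frac{1+tz}{z-t}\,\sigma(\rmd t)
$$
on the truncated cone where $\varphi_\mu$ is originally defined. First I would verify that the right-hand side extends analytically to all of $\C^+$: for $z \in \C^+$ the integrand is bounded in $t$ (locally uniformly in $z$) and $\sigma$ is finite, so differentiation under the integral sign yields analyticity on $\C^+$. A direct computation gives
$$
\Im \frac{1+tz}{z-t} \;=\; -\,\frac{(1+t^2)\,\Im z}{|z-t|^2} \;\le\; 0 \qquad (z \in \C^+),
$$
so integration against $\sigma \ge 0$ forces $\Im \varphi_\mu(z) \le 0$ throughout $\C^+$. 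Uniqueness of analytic continuation identifies the extension with $\varphi_\mu$.

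\textbf{Converse.} Suppose $\varphi_\mu$ admits an analytic extension to $\C^+$ with values in $\C^- \cup \R$. Then $-\varphi_\mu$ is a Pick/Herglotz function on $\C^+$, and the Herglotz–Nevanlinna representation produces $\alpha \in \R$, $\beta \ge 0$, and a finite positive Borel measure $\sigma$ on $\R$ such that
$$
-\varphi_\mu(z) \;=\; \alpha + \beta z + \int_\R \frac{1+tz}{t-z}\,\sigma(\rmd t), \qquad z \in \C^+.
$$
The decisive step is to show that the linear coefficient $\beta = \lim_{y\to\infty}(-\varphi_\mu(iy))/(iy)$ vanishes. This reduces to the asymptotic $\varphi_\mu(iy) = o(y)$ as $y \to \infty$, which is a general feature of Voiculescu transforms of probability measures: nontangentially $F_\mu(iy)/iy \to 1$ (a routine consequence of $G_\mu(iy)\sim 1/iy$ for a probability measure), and hence $\varphi_\mu(iy) = F_\mu^{-1}(iy)-iy = o(y)$ already on the cone; the limit propagates to the global extension by analyticity. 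Once $\beta = 0$, the displayed identity rearranges to exactly~(\ref{eqno1a}) with $\gamma = -\alpha$, and the conversion formulas given in the excerpt between the free generating pair $(\gamma,\sigma)$ and the free characteristic triplet $(a,\eta,\nu)$ place $\mu$ in the freely infinitely divisible class.

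\textbf{Main obstacle.} The critical technical point is the vanishing of the Nevanlinna linear term $\beta$. A Pick function with $\beta > 0$ satisfies the other hypotheses on the right-hand side of the theorem yet is not the Voiculescu transform of a probability measure (it would correspond to an extra semicircular component coming from a point mass of $\sigma$ at infinity); the sublinear growth estimate, grounded in the probabilistic normalization $\mu(\R) = 1$, is the genuine bridge from the analytic-extension condition to the probability-measure setting and, via the Bercovici–Pata map mentioned at the start of the section, to the classical Lévy–Khintchine theory.
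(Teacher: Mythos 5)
The paper itself offers no proof of this statement: Theorem~\ref{BV_char_of_FID} is quoted directly from Bercovici and Voiculescu \cite{BeVo1993}, so there is no internal argument to compare yours against. Judged on its own terms, your forward direction is fine: the computation $\Im\frac{1+tz}{z-t}=-(1+t^2)\,\Im z/|z-t|^2\le 0$ is correct, and finiteness of $\sigma$ justifies the analytic extension of the right-hand side of \eqref{eqno1a} to all of $\C^+$. The structural problem is that your entire argument is routed through the free L\'evy--Khintchine representation \eqref{eqno1a}, which the paper also states without proof and which, in the literature you would have to lean on, is \emph{derived from} the very theorem you are proving. This makes the converse circular exactly where the work should happen.

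Concretely: in the converse you correctly obtain the Nevanlinna representation of $-\varphi_\mu$ and correctly argue $\beta=0$ from $F_\mu(iy)/(iy)\to 1$, arriving at a formula of the shape \eqref{eqno1a}; but you then conclude that $\mu$ is freely infinitely divisible by invoking the ``if'' direction of the L\'evy--Khintchine statement. That step is the actual mathematical content of the theorem: one must produce, for each $n$, a probability measure $\mu_n$ with $\varphi_{\mu_n}=\varphi_\mu/n$, and this requires the inverse characterization that \emph{every} function analytic on $\C^+$ (or a truncated cone) with values in $\C^-\cup\R$ and with $\varphi(z)=o(|z|)$ nontangentially is the Voiculescu transform of a unique probability measure. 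That existence statement --- proved in \cite{BeVo1993} by showing that $z\mapsto z+\varphi(z)$ admits a right inverse which is the reciprocal Cauchy transform of a probability measure, via a Julia--Carath\'eodory/Nevanlinna argument on the class of functions $F$ with $\Im F(z)\ge \Im z$ and $F(iy)/(iy)\to1$ --- appears nowhere in your proposal. Without it you have shown only that an analytically extendable $\varphi_\mu$ admits the integral form \eqref{eqno1a}, not that $\mu$ possesses free convolution $n$-th roots. To make the proof non-circular you must either supply that inverse characterization or restructure the development so that the L\'evy--Khintchine representation is a corollary of the present theorem rather than an input to it.
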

Recently free infinite divisibility has been proved for: normal distribution, some of the Boolean-stable distributions, some of the beta distributions and some of the gamma distributions, including the chi-square distribution, see \cite{AH14,BBLS11, H14}.

\subsection{Freely self-decomposable distributions}
A probability measure $\mu$ on $\R$ is called \emph{freely self-decomposable} if for any $c\in(0,1)$ there exists a probability measure $\rho_{c}\in\calP(\R)$ such that
\begin{equation}\label{self-decomposabledefinition}
\mu = D_{c}(\mu)\boxplus \rho_{c},
\end{equation}
where $D_{c}$ is dilation, that is, $D_{c}(\mu)(B) := \mu(c^{-1}B)$ for any $c>0$ and $B\in\calB(\R)$.
It is known that the measures $\rho_{c}$ are freely infinitely divisible too.

The concept has six characterizations at least:
(I) in terms of the free L{\'e}vy measure, (II) limit theorem,
{(III) stochastic integral representation}, {(IV) self-similarity}, {(V) the free cumulant sequence} and {(VI) analytic functions}.
From (I) to (IV) are proved in \cite{BNT06} based on the Bercovici-Pata bijection and classical results (see page 2 in \cite{Sato2010}).
(V) and (VI) are proved in  \cite{HTS}.
In this paper we will apply {(I)} and {(V)}.

\subsubsection{Free L{\'e}vy measure}
In this section we characterize the class of classical and freely self-decomposable distributions via its L\'{e}vy measure. Firstly we define the concept of unimodality. A measure $\rho$ is said to be \emph{unimodal with mode $a\in\mathbb{R}$} if
\begin{align*}
\rho(dx)=c\delta_a+ f(x)\,\rmd x,
\end{align*}
where $\rmd x$ is the Lebesgue measure, $c\geq0$ and $f(x)$ is a function which is non-decreasing on $(-\infty,a)$ and non-increasing on $(a,\infty)$.
If $\mu$ is classically or freely self-decomposable, then its corresponding L{\'e}vy measure
is absolutely continuous with respect to Lebesgue measure and classical/free L{\'e}vy measure
$\nu_{\mu}$ has following form:
\begin{align*}
\nu_\mu(\rmd x)=\frac{k(x)}{|x|}\,\rmd x,
\end{align*}
where the measure $k(x)\,\rmd x$ is unimodal with mode $0$, see \cite{BNT06} for details.



\subsubsection{Free cummulant sequence}

If $\mu$ is a compactly supported probability measure on $\R$ then the free cumulant transform
$R_\mu$ can be extended analytically to an
open neighborhood of $0$ and $R_{\mu}(0)=0$. Thus $R_\mu(z)$ admits a power series
expansion:
\begin{equation*}
 R_\mu(z)=\sum_{n=1}^{\infty} r_{n}(\mu)z^n
\end{equation*}
in a disc around $0$. The coefficients $\{ r_n(\mu)\}_{n\ge1}$
are called the \textit{free cumulants of $\mu$} (see e.g.\ \cite{BG2006}).
The can be also computed from moments of $\mu$ via M\"obius inversion, see \cite{NiSpBook,BG2006}.
Recall that a sequence $\{a_n\}_{n=1}^\infty$ of real numbers is said to be \emph{conditionally positive definite} if the infinite matrix $\{a_{i+j}\}_{i,j=1}^{\infty}$ is positive definite, see \cite{NiSpBook}.
It is equivalent to positive definiteness of the sequence $\{a_{n+2}\}_{n=0}^\infty$.

\begin{prop}[\cite{HTS}]\label{sd_vs_cumulants}
Let $\mu$ be a Borel probability measure on $\R$ with moments of all
orders, and let $\{ r_{n}(\mu)\}_{n=1}^{\infty}$ be the free cumulant
sequence of $\mu$. Then:

\begin{enumerate}[{\rm (i)}]

\item \label{BB}
	If $\mu$ is freely self-decomposable then $\{n r_{n}(\mu)\}_{n=1}^{\infty}$ is
        conditionally positive definite.

\item \label{aaa}
	Suppose further that $\mu$ has compact support. Then $\mu$
        is freely self-decomposable if and only if $\{n r_{n}(\mu)\}_{n=1}^{\infty}$
        is conditionally positive definite.

\end{enumerate}
\end{prop}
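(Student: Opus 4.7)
The plan is to reformulate the self-decomposability relation $\mu=D_c(\mu)\boxplus\rho_c$ in terms of free cumulants, and then to analyse the resulting condition by passing to a limit $c\nearrow 1$ in (i) and by integrating over $c\nearrow 1$ in (ii). Because $R_\mu$ linearizes $\boxplus$ and satisfies $R_{D_c(\mu)}(z)=R_\mu(cz)$, the existence of $\rho_c$ is equivalent to the existence of a probability measure whose free cumulant sequence satisfies
\begin{equation*}
r_n(\rho_c)=(1-c^n)\,r_n(\mu),\qquad n\ge 1.
\end{equation*}

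For part (i), I would invoke the standard fact that when $\mu$ is freely self-decomposable, every $\rho_c$ in the decomposition is freely infinitely divisible. Since $\mu$ has moments of all orders, the cumulant identity above transfers this property to $\rho_c$, and the free L\'evy-Khintchine representation \eqref{eqno1} applied to $\rho_c$ yields
\begin{equation*}
r_{n+2}(\rho_c)=\int_\R t^n(1+t^2)\,\sigma_c(\rmd t),\qquad n\ge 0,
\end{equation*}
for a finite positive measure $\sigma_c$ on $\R$ with moments of all orders. Thus $\{r_{n+2}(\rho_c)\}_{n\ge 0}$ is a Hamburger moment sequence of a positive measure and is therefore positive definite, which means that for every $c\in(0,1)$ the sequence $\{(1-c^{n+2})r_{n+2}(\mu)\}_{n\ge 0}$ is positive definite. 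Dividing by $1-c>0$ and using that $(1-c^{n+2})/(1-c)\to n+2$ as $c\nearrow 1$, together with the observation that entrywise limits of positive semidefinite matrices remain positive semidefinite, one concludes that $\{(n+2)r_{n+2}(\mu)\}_{n\ge 0}$ is positive definite, which is exactly the conditional positive definiteness of $\{n\,r_n(\mu)\}_{n\ge 1}$.

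For the converse in (ii), suppose $\mu$ is compactly supported in some $[-R,R]$ and that $\{(n+2)r_{n+2}(\mu)\}_{n\ge 0}$ is positive definite. Because the free cumulants of a compactly supported measure grow at most geometrically, the Hamburger moment problem admits a compactly supported positive finite solution $\tau$ with $(n+2)r_{n+2}(\mu)=\int_\R s^n\,\tau(\rmd s)$. The identity $1-c^{n+2}=(n+2)\int_c^1 x^{n+1}\,\rmd x$ together with Fubini's theorem gives
\begin{equation*}
(1-c^{n+2})\,r_{n+2}(\mu)=\int_c^1\!\int_\R s^n x^{n+1}\,\tau(\rmd s)\,\rmd x=\int_\R u^n\,\tau^{(c)}(\rmd u),
\end{equation*}
where $\tau^{(c)}$ is the compactly supported positive measure obtained by averaging over $x\in[c,1]$ the pushforward of $x\,\tau$ under $s\mapsto sx$. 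Setting $\sigma_c(\rmd t):=(1+t^2)^{-1}\tau^{(c)}(\rmd t)$ gives a valid finite positive generating measure, and plugging it into \eqref{eqno1} with a suitably chosen drift produces a compactly supported freely infinitely divisible probability measure $\rho_c$ whose free cumulants are $(1-c^n)\,r_n(\mu)$. Hence $R_\mu(z)=R_\mu(cz)+R_{\rho_c}(z)$ on a disc around $0$, i.e.\ $\mu=D_c(\mu)\boxplus\rho_c$ for every $c\in(0,1)$, giving free self-decomposability.

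The main obstacle is the converse direction: producing $\rho_c$ as a bona fide probability measure from the positive-definiteness assumption. The limit argument in (i) is essentially soft once the L\'evy-Khintchine form is available for $\rho_c$, whereas in (ii) one must both justify the change-of-variables identity as yielding a genuine positive measure with the required moments and then rely on the Bercovici-Voiculescu free L\'evy-Khintchine theorem to turn the resulting free generating pair $(\gamma_c,\sigma_c)$ into an actual freely infinitely divisible probability measure whose first free cumulant can be adjusted by the free drift to match $(1-c)\,r_1(\mu)$.
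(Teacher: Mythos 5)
The paper itself offers no proof of this proposition: it is quoted from \cite{HTS}, so there is no internal argument to compare against line by line. Your route is the natural one and, in structure, matches how such statements are proved in the literature: rewrite $\mu=D_c(\mu)\boxplus\rho_c$ as $r_n(\rho_c)=(1-c^n)r_n(\mu)$ using $R_{D_c(\mu)}(z)=R_\mu(cz)$; for (i) use the free L\'evy--Khintchine form of the (freely infinitely divisible) $\rho_c$ to see that $\{(1-c^{n+2})r_{n+2}(\mu)\}_{n\ge0}$ is a Hamburger moment sequence, divide by $1-c$ and let $c\nearrow1$, using that entrywise limits of positive semidefinite Hankel matrices are positive semidefinite; for (ii) represent $(n+2)r_{n+2}(\mu)=\int s^n\,\tau(\rmd s)$ with $\tau$ compactly supported (geometric growth of free cumulants of a compactly supported measure), use $1-c^{n+2}=(n+2)\int_c^1x^{n+1}\rmd x$ to build $\tau^{(c)}$, and invoke the Bercovici--Voiculescu existence theorem to realize a freely infinitely divisible $\rho_c$ with the prescribed cumulants, adjusting the drift to match $r_1$. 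All of these steps are sound, and the concluding identification $\mu=D_c(\mu)\boxplus\rho_c$ is legitimate because all measures involved are compactly supported, hence determined by their moments.

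The one genuinely circular spot is in (i), and it is not the place you flag as the main obstacle. You write that the identity $r_n(\rho_c)=(1-c^n)r_n(\mu)$ ``transfers'' finiteness of moments to $\rho_c$; but that identity only makes sense once $\rho_c$ is known to have moments of all orders (free cumulants are defined from moments via M\"obius inversion), and additivity of cumulants under $\boxplus$ for measures of possibly unbounded support also needs justification. The standard repair: on a truncated cone one has $\varphi_{\rho_c}=\varphi_\mu-\varphi_{D_c(\mu)}$; finiteness of all moments of $\mu$ gives an asymptotic expansion of $\varphi_\mu$, hence of $\varphi_{\rho_c}$, to all orders, and for a freely infinitely divisible measure such an expansion is equivalent to finiteness of all moments of the measure and of its free L\'evy measure (this is exactly the lemma, cf.\ \cite{BG2006,HTS}, that also yields the formula $r_{n+2}(\rho_c)=\int_{\R} t^n(1+t^2)\,\sigma_c(\rmd t)$ you use). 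In (ii) you should likewise cite the geometric bound on free cumulants of compactly supported measures (e.g.\ \cite{NiSpBook}) and observe that your constructed $\rho_c$ has geometrically growing cumulants, hence compact support, so that equality of $R$-transforms near $0$ indeed forces $\mu=D_c(\mu)\boxplus\rho_c$. With these standard facts supplied and properly cited, your proof is complete.
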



\begin{rem}\label{PD}
Suppose that $\mu$ is compactly supported. It is well known that $\mu$ is freely infinitely divisible if and only if $\{ r_n(\mu)\}_{n=1}^\infty$ is conditionally positive definite (see e.g.\ \cite[Theorem 13.16]{NiSpBook}).
Observe the following implication:
\[
\text{$\{nr_{n}(\mu)\}_{n=1}^{\infty}$ is conditionally positive definite} \quad \Longrightarrow \quad \text{$\{ r_{n}(\mu)\}_{n=1}^{\infty}$ is conditionally positive definite}.
\]
Indeed, the sequence $\{\frac{1}{n}\}_{n=1}^\infty$ is conditionally positive definite since
$\frac{1}{n}$ is the $(n-1)$-th moment of the uniform distribution on $(0,1)$ and the pointwise product of two conditionally positive definite sequences is again conditionally positive definite.
\end{rem}


\subsection{Free regular distributions}
A freely infinitely divisible distribution $\mu$ on $[0,\infty)$ is said to be \emph{free regular} if the measure $\mu^{\boxplus t}$ is also a probability measure on $[0,\infty)$ for all $t>0$. For example, the Marchenko-Pastur law $\Pi_{p,\theta}$ is free regular, where
\begin{align*}
\Pi_{p,\theta}(dx):=\max\{1-p,0\}\delta_0+\frac{\sqrt{(\theta(1+\sqrt{p})^2-x)(x-\theta(1-\sqrt{p})^2)}}{2\pi \theta x}\mathbf{1}_{(\theta(1-\sqrt{p})^2,\theta(1+\sqrt{p})^2)}(x)dx,
\end{align*}
for $p,\theta>0$ since $\Pi_{p,\theta}^{\boxplus t}=\Pi_{pt,\theta}\in\calP([0,\infty))$ for all $t>0$. In \cite{AHS13} a characterization of free regular measures is given via R-transform as follows:

\begin{thm}\label{thm:AHS13}\cite[Theorem 4.2]{AHS13}
Let $\mu$ be a freely infinitely divisible distribution on $[0,\infty)$. Then
$\mu$ is free regular if and only if its free cumulant transform is represented as
\begin{align*}
R_\mu(z)= \eta' z+ \int_\mathbb{R} \left( \frac{1}{1-zx}-1 \right) \nu(dx), \qquad(z\in\C^-),
\end{align*}
for some $\eta'\geq 0$ and $\nu$ is the free L\'{e}vy measure with $\int_{(0,\infty)} \min\{1,x^2\} \nu(dx)<\infty$ and $\nu((-\infty,0])=0$.
\end{thm}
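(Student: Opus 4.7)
The plan is to route the proof through the Bercovici--Pata bijection $\Lambda$, which establishes a bicontinuous correspondence between classical and freely infinitely divisible distributions preserving the characteristic triplet $(a,\eta,\nu)$. On the classical side, a probability measure $\mu_0$ with triplet $(a,\eta,\nu)$ is a subordinator (i.e., $\mu_0^{*t}$ is supported on $[0,\infty)$ for every $t>0$) precisely when $a=0$, $\nu$ is carried by $(0,\infty)$, $\int_0^\infty\min\{1,x\}\,\nu(dx)<\infty$, and the drift is nonnegative. I would transport this classical characterization to the free side.

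For the implication $(\Leftarrow)$, I would first observe that $R_{\mu^{\boxplus t}}=tR_\mu$ has the same structural shape as $R_\mu$ (drift $t\eta'\geq 0$ and free L\'evy measure $t\nu$ still concentrated on $(0,\infty)$); hence it suffices to show that any freely infinitely divisible $\mu$ with an $R$-transform of this form is supported on $[0,\infty)$. I would approximate $\nu$ by finite measures $\nu_n$ concentrated on compact subintervals of $(0,\infty)$. The resulting measures $\mu_n$ are translates (by $\eta'\geq 0$) of free compound Poisson distributions with positive jump distributions, which are manifestly supported on $[0,\infty)$. Weak continuity of the correspondence $(\eta',\nu_n)\mapsto\mu_n$ then pushes the conclusion to the limit.

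For the implication $(\Rightarrow)$, assume $\mu$ is free regular and set $\mu_0:=\Lambda^{-1}(\mu)$; since $\Lambda$ preserves the characteristic triplet, $\mu$ and $\mu_0$ share $(a,\eta,\nu)$. I would then invoke two known properties of $\Lambda$: (i) $\Lambda(\mu_0^{*t})=\mu^{\boxplus t}$ for every $t>0$, and (ii) $\Lambda(\rho)$ is supported on $[0,\infty)$ if and only if $\rho$ is. Together they force $\mu_0^{*t}$ to sit on $[0,\infty)$ for every $t>0$, so $\mu_0$ is a classical subordinator. The classical L\'evy--Khintchine theory for subordinators then yields $a=0$, $\nu((-\infty,0])=0$, $\int_0^\infty\min\{1,x\}\,\nu(dx)<\infty$, and a nonnegative drift $\eta'$, which, when fed back into the free L\'evy--Khintchine formula \eqref{eqno1}, collapses to the displayed form for $R_\mu$.

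The main obstacle is justifying that $\Lambda$ genuinely preserves positivity of support throughout the full family of convolution powers; this compatibility, combined with the intertwining identity $\Lambda(\mu_0^{*t})=\mu^{\boxplus t}$, is what makes the Bercovici--Pata transfer go through. An alternative, purely free-probabilistic route would analyse the boundary behaviour of $G_\mu$ on $(-\infty,0)$: for $\mu$ on $[0,\infty)$, $G_\mu$ extends analytically across the negative real axis with negative real values there, and propagating this constraint through the scaling $R_{\mu^{\boxplus t}}=tR_\mu$ as $t\to 0^+$ should force the triplet restrictions directly. That analytic path, however, is substantially heavier than the Bercovici--Pata reduction.
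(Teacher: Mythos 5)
There is no proof of this statement in the paper to compare against: the result is quoted verbatim from \cite[Theorem 4.2]{AHS13}, so your attempt must stand on its own. Your $(\Leftarrow)$ direction is essentially sound: approximate $\nu$ by finite measures on compact subsets of $(0,\infty)$, note that the corresponding measures are nonnegative translates of free compound Poisson laws with positive jump distribution (hence supported on $[0,\infty)$, and the same for all their $\boxplus$-powers since the triplet just scales by $t$), and pass to the limit by weak continuity of the free L\'evy--Khintchine correspondence. That is close to the standard argument.

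The $(\Rightarrow)$ direction, however, has a genuine gap: your key ingredient (ii), ``$\Lambda(\rho)$ is supported on $[0,\infty)$ if and only if $\rho$ is,'' is false in the direction you actually need. The very paper you are writing into supplies the counterexample: for $1<p<2$ the measure $\mu(p,p)$ is freely infinitely divisible and supported on $[0,p^p(p-1)^{1-p}]\subset[0,\infty)$, yet its free L\'evy measure is carried by $(-1,0)$ (Proposition~\ref{prop:LK p,p} and Remark~\ref{rem:LK p,p}); consequently its Bercovici--Pata preimage $\Lambda^{-1}(\mu(p,p))$ is a classical infinitely divisible law whose L\'evy measure sits on the negative half-line, and such a law cannot be supported on $[0,\infty)$. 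So ``$\Lambda(\rho)$ on $[0,\infty)$ $\Rightarrow$ $\rho$ on $[0,\infty)$'' fails, and with it the step that transports free regularity of $\mu$ to the subordinator property of $\mu_0=\Lambda^{-1}(\mu)$. (The statement ``$\mu^{\boxplus t}$ on $[0,\infty)$ for all $t>0$ implies $\mu_0^{*t}$ on $[0,\infty)$ for all $t>0$'' is true, but it is essentially equivalent to the theorem itself, so invoking it is circular.) A workable replacement is to argue directly on the free side: use the vague convergence $t^{-1}\mu^{\boxplus t}\to\nu$ on $\mathbb{R}\setminus\{0\}$ as $t\to0^+$ to conclude $\nu((-\infty,0))=0$ from the positivity of the supports of $\mu^{\boxplus t}$, and then separate arguments to kill the semicircular part, to get $\int_{(0,1)}x\,\nu(dx)<\infty$, and to show the drift $\eta'$ is nonnegative --- which is the route taken in \cite{AHS13}. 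Your closing remark about analysing $G_\mu$ on $(-\infty,0)$ points in this direction but is not developed enough to close the gap.
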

Futhermore, free regular measures are characterized as free subordinators, see Theorem 4.2 in \cite{AHS13}.


\section{Fuss-Catalan distributions $\mu(p,p)$}
In this section we discuss the Fuss-Catalan distributions $\mu(p,p)$, $p\geq1$.
First we study free infinite divisibility. In Section 3.2 we obtain a result for free self-decomposability of $\mu(p,p)$. Then we provide free L\'{e}vy-Khintchine representation of $\mu(p,p)$ via the Gauss hypergeometric functions. In Section 3.4 we introduce concept of the free $L_1$ class and prove
that $\mu(p,p)$ is in the free $L_1$ class for all $1\leq p \leq 2$.
In Section 3.5 we investigate free regularity for $\mu(p,p)$.
Finally we prove that all the distributions $\mu(p,p)$, $p\geq1$, are unimodal.

\subsection{Free infinite divisibility for $\mu(p,p)$}
By (\ref{freecumulants of mu(p,r)}) the free cumulants of $\mu(p,p)$ are given by
$r_n(\mu(p,p))=A_n(0,p)=\binom{p}{n}$, $n\geq1$.
Therefore first we are going to study the sequence $\left\{ \binom{p}{n+2}\right\}_{n=0}^\infty$.

\begin{prop}\label{prop:binom p,n+2}
If $-1<p<2$, $p\neq 0,1$, then the sequence $\left\{\binom{p}{n+2}\right\}_{n=0}^{\infty}$
admits the following integral representation:
\begin{equation*}
\binom{p}{n+2}=\int_{-1}^{0}x^n\cdot
\frac{x\sin(p\pi)}{\pi}\left(\frac{1+x}{-x}\right)^p\,dx.
\end{equation*}
This sequence is positive definite if and only if $p\in[-1,0]\cup[1,2]$.
\end{prop}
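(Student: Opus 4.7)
The plan is to handle the integral representation first and then read off positive-definiteness from it.

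For the integral formula, the natural move is to substitute $x=-t$ with $t\in(0,1)$. The right-hand side becomes
\[
\frac{(-1)^{n+1}\sin(p\pi)}{\pi}\int_0^1 t^{n+1-p}(1-t)^p\,dt,
\]
which is a Beta integral $B(n+2-p,p+1)=\Gamma(n+2-p)\Gamma(p+1)/\Gamma(n+3)$; the hypothesis $-1<p<2$ is exactly what makes the exponents admissible for every $n\ge 0$. Applying the reflection formula $\Gamma(n+2-p)\Gamma(p-n-1)=\pi/\sin((n+2-p)\pi)$ together with the identity $\sin((n+2-p)\pi)=(-1)^{n+1}\sin(p\pi)$ cancels the prefactor $(-1)^{n+1}\sin(p\pi)/\pi$, leaving $\Gamma(p+1)/[\Gamma(p-n-1)\Gamma(n+3)]$, which is the Pochhammer form of $\binom{p}{n+2}$. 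The exclusion $p\neq 0,1$ merely avoids the trivial case where both sides vanish identically for every $n$.

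For the positive-definiteness criterion, set $a_n:=\binom{p}{n+2}$. In the necessary direction, positive semi-definiteness of the infinite Hankel matrix $(a_{i+j})_{i,j\ge 0}$ forces both $a_0\ge 0$ and $a_0a_2-a_1^2\ge 0$. A direct expansion yields
\[
a_0=\frac{p(p-1)}{2},\qquad a_0a_2-a_1^2=-\frac{p^2(p-1)^2(p-2)(p+1)}{144},
\]
so the first inequality requires $p\in(-\infty,0]\cup[1,\infty)$ and the second requires $p\in[-1,2]$; their intersection is exactly $[-1,0]\cup[1,2]$.

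For the converse direction one verifies positive-definiteness on each piece. When $p\in(-1,0)\cup(1,2)$, the sign of the density $\frac{x\sin(p\pi)}{\pi}(\frac{1+x}{-x})^p$ on $(-1,0)$ equals that of $-\sin(p\pi)$, which is positive on both subintervals, so the integral formula realises $\{\binom{p}{n+2}\}$ as the moment sequence of a positive finite measure supported in $[-1,0]$, and the Hankel matrix is automatically PSD. The four exceptional values are handled by inspection: $p=-1$ gives the moment sequence $((-1)^n)$ of $\delta_{-1}$; $p=2$ gives $(1,0,0,\ldots)$, the moments of $\delta_0$; and $p=0,1$ give the identically zero sequence. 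The only delicate step is the gamma-function identity in Step~1, where one must track the sign $\sin((n+2-p)\pi)=(-1)^{n+1}\sin(p\pi)$ carefully so that the phase cancels uniformly in $n$; everything else is elementary algebra together with a standard Hamburger moment observation.
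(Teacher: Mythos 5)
Your proof is correct and follows essentially the same route as the paper: the substitution $x\mapsto -t$, the Beta integral, and Euler's reflection formula for the representation, then nonnegativity of the density for $p\in(-1,0)\cup(1,2)$, direct inspection at $p=-1,0,1,2$, and the two Hankel conditions $a_0\geq 0$, $a_0a_2-a_1^2\geq 0$ for necessity. The only cosmetic difference is that you apply the reflection formula at the argument $n+2-p$ (tracking the sign $(-1)^{n+1}$ there), whereas the paper peels $\Gamma(n+2-p)$ down to $\Gamma(1-p)$ first and reflects at $p$; the computations are equivalent.
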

\begin{proof}
Substituting $x\to -y$, using the properties of the Beta function
and applying Euler's reflection formula:
\begin{align*}
\Gamma(1-p)\Gamma(p)=\frac{\pi}{\sin(p\pi)}, \qquad p\notin\mathbb{Z},
\end{align*}
we get
\begin{align*}
\int_{-1}^{0}x^n\cdot
\frac{x\sin(p\pi)}{\pi}&\left(\frac{1+x}{-x}\right)^p\,dx\\
&=(-1)^{n+1}\int_{0}^{1}y^{n+1-p}(1-y)^{p}\cdot
\frac{\sin(p\pi)}{\pi}\,dy\\
&=(-1)^{n+1}\frac{\sin(p\pi)}{\pi}
\frac{\Gamma(n+2-p)\Gamma(p+1)}{\Gamma(n+3)}\\
&=(-1)^{n+1}\frac{\sin(p\pi)}{\pi}\frac{(n+1-p)\ldots(n+1-p)(2-p)(1-p)\Gamma(1-p)\times p\Gamma(p)}{(n+2)!}\\
&=\frac{p(p-1)(p-2)\ldots(p-n-1)}{(n+2)!}=\binom{p}{n+2}.
\end{align*}
If $p\in(-1,0)\cup(1,2)$, $-1<x<0$ then $x\sin(p\pi)>0$.
Therefore the sequence $a_{n}(p):=\binom{p}{n+2}$ is positive definite
for $p\in(-1,0)\cup(1,2)$. We have also $a_{n}(-1)=(-1)^n$,
$a_{n}(0)=a_{n}(1)=0$ for $n\geq0$, $a_{0}(2)=1$ and $a_{n}(2)=0$ for $n\geq1$,
so the sequences $a_{n}(-1)$, $a_{n}(0)$, $a_{n}(1)$, $a_{n}(2)$ are positive definite too. On the other hand, if the sequence $a_n(p)$ is positive definite, then $a_0(p)=p(p-1)/2\geq0$ and \[a_{0}(p)a_{2}(p)-a_{1}(p)^2=\frac{1}{144}(2-p)(1+p)p^2(p-1)^2\geq0,\]
which implies that $p\in[-1,0]\cup[1,2]$.
\end{proof}

According to Remark~\ref{PD} and Proposition~\ref{prop:binom p,n+2}, we have

\begin{thm}\label{thm:muppinfdiv}
The Fuss-Catalan distribution $\mu(p,p)$ is freely infinitely divisible if and only if $1\leq p \leq 2$.
\end{thm}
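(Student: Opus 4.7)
The plan is to reduce the theorem to the two ingredients already at hand: the explicit free cumulants of $\mu(p,p)$ given in~(\ref{freecumulants of mu(p,r)}) and the positive-definiteness analysis of Proposition~\ref{prop:binom p,n+2}. The bridge between them is Remark~\ref{PD}, which characterizes free infinite divisibility of a compactly supported measure via conditional positive definiteness of its free cumulant sequence.

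First, specializing $r=p$ in~(\ref{freecumulants of mu(p,r)}) gives $r_n(\mu(p,p)) = A_n(0,p) = \binom{p}{n}$ for all $n \geq 1$. Next, since $\mu(p,p)$ has compact support---explicitly $[0, p^p(p-1)^{1-p}]$ for $p > 1$, and $\{1\}$ for $p = 1$---the criterion of Remark~\ref{PD} applies, and $\mu(p,p)$ is freely infinitely divisible if and only if $\{\binom{p}{n}\}_{n=1}^\infty$ is conditionally positive definite. By the equivalence recalled in the text just before Proposition~\ref{sd_vs_cumulants}, this is in turn equivalent to positive definiteness of the shifted sequence $\{\binom{p}{n+2}\}_{n=0}^\infty$.

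Finally, Proposition~\ref{prop:binom p,n+2} characterizes exactly those $p$ for which $\{\binom{p}{n+2}\}_{n=0}^\infty$ is positive definite, namely $p \in [-1,0] \cup [1,2]$. Intersecting with the standing assumption $p \geq 1$ yields $p \in [1,2]$, which is the statement of the theorem.

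There is no substantive obstacle: the result is an immediate corollary of Proposition~\ref{prop:binom p,n+2}, whose two directions already package both the sufficiency (a positive integral representation of the sequence on $(-1,0)$ for $p \in (1,2)$, together with the easy boundary cases) and the necessity (failure of the $2 \times 2$ principal minor $\binom{p}{2}\binom{p}{4}-\binom{p}{3}^2$ to be nonnegative when $p > 2$). The only minor item to check is compactness of the support of $\mu(p,p)$, which is needed to invoke the ``only if'' direction of Remark~\ref{PD}, and this is read off directly from the support interval recorded in the introduction.
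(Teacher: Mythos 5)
Your proposal is correct and follows essentially the same route as the paper: the paper also derives the theorem immediately from the free cumulants $r_n(\mu(p,p))=\binom{p}{n}$ via Remark~\ref{PD} together with Proposition~\ref{prop:binom p,n+2}. Your spelling out of the shift to $\{\binom{p}{n+2}\}_{n\geq 0}$ and the compact-support hypothesis is exactly the implicit content of the paper's one-line deduction.
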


Note that this case was overlooked in Corollary~7.1 in \cite{MP2014}.

\subsection{Free self-decomposability for $\mu(p,p)$}
In this section we will prove free self-decomposability for $\mu(p,p)$, $1\leq p \leq 2$.
We need the following

\begin{prop}\label{prop:binom n+2,p,n+2}
If $0<p<2$, $p\ne1$, then  the sequence $\left\{(n+2)\binom{p}{n+2}\right\}_{n=0}^{\infty}$
admits the following integral representation:
\begin{equation}\label{integralsequenceb}
(n+2)\binom{p}{n+2}=-\int_{-1}^{0}x^n\cdot
\frac{p \sin(p\pi)}{\pi}\left(\frac{1+x}{-x}\right)^{p-1}\,dx.
\end{equation}
This sequence is positive definite if and only if $p\in\{0\}\cup [1,2]$.
\end{prop}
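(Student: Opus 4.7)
The plan is to mirror the proof of Proposition~\ref{prop:binom p,n+2}, replacing the exponent $p$ by $p-1$ in the Beta-integral computation and keeping track of the new prefactor. This will yield the integral representation in one stroke, after which positive definiteness becomes a matter of reading off the sign of the weight and checking a $2\times2$ Hankel determinant.

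First, I would substitute $x = -y$ in the right-hand side of \eqref{integralsequenceb} to reduce the integral to a standard Beta integral:
\begin{equation*}
-\int_{-1}^{0} x^{n}\left(\frac{1+x}{-x}\right)^{p-1}\rmd x
= (-1)^{n+1}\int_{0}^{1}y^{n-p+1}(1-y)^{p-1}\,\rmd y
= (-1)^{n+1}\,\frac{\Gamma(n-p+2)\Gamma(p)}{\Gamma(n+2)}.
\end{equation*}
Multiplying by $-\frac{p\sin(p\pi)}{\pi}$ and applying Euler's reflection formula $\Gamma(p)\Gamma(1-p)=\pi/\sin(p\pi)$, the prefactor becomes $(-1)^{n}\,p/\Gamma(1-p)$. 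Since $\Gamma(n-p+2)/\Gamma(1-p) = \prod_{k=1}^{n+1}(k-p) = (-1)^{n+1}\prod_{k=1}^{n+1}(p-k)$, one obtains
\begin{equation*}
-\int_{-1}^{0}x^{n}\cdot\frac{p\sin(p\pi)}{\pi}\left(\frac{1+x}{-x}\right)^{p-1}\rmd x
= \frac{p(p-1)(p-2)\cdots(p-n-1)}{(n+1)!} = (n+2)\binom{p}{n+2},
\end{equation*}
which is \eqref{integralsequenceb}. This part is routine once the exponent shift $p\mapsto p-1$ is spotted; the only subtlety is keeping the signs straight.

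For the "if" direction of the positive-definiteness claim, I would note that the weight $w(x) := -\frac{p\sin(p\pi)}{\pi}\left(\frac{1+x}{-x}\right)^{p-1}$ on $(-1,0)$ is nonnegative precisely when $p\sin(p\pi)\leq 0$, which on $(0,2)$ singles out $p\in[1,2]$. For $p\in(1,2)$ this presents $(n+2)\binom{p}{n+2}$ as a Hausdorff moment sequence on $[-1,0]$, hence positive definite. The boundary cases $p\in\{0,1,2\}$ must be treated separately: at $p=0$ and $p=1$ the sequence is identically zero, while at $p=2$ only the zeroth term $a_{0}=2$ survives, and in all three situations positive definiteness is immediate.

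For the converse, the main obstacle is confirming that no value outside $\{0\}\cup[1,2]$ works, and I would settle this with a minimal $2\times 2$ Hankel check. Writing $a_{n}(p):=(n+2)\binom{p}{n+2}$, positive definiteness forces $a_{0}(p)=p(p-1)\geq 0$, restricting $p$ to $(-\infty,0]\cup[1,\infty)$, together with
\begin{equation*}
a_{0}(p)a_{2}(p)-a_{1}(p)^{2} = -\frac{p^{3}(p-1)^{2}(p-2)}{12}\geq 0,
\end{equation*}
which for $p\leq 0$ rules out everything except $p=0$ and for $p\geq 1$ forces $p\leq 2$. Combining the two bounds leaves exactly $p\in\{0\}\cup[1,2]$, completing the proof.
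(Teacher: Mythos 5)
Your proof is correct and takes essentially the same route as the paper's: the substitution $x=-y$ reducing \eqref{integralsequenceb} to a Beta integral combined with Euler's reflection formula, positivity of the weight for $1<p<2$ together with the trivial cases $p\in\{0,1,2\}$ for sufficiency, and the very same $2\times 2$ Hankel check $b_0(p)\geq 0$, $b_0(p)b_2(p)-b_1(p)^2=\tfrac{1}{12}p^3(p-1)^2(2-p)\geq 0$ for necessity. The only blemish is a sign slip in the intermediate bookkeeping (the prefactor should be $(-1)^{n+1}p/\Gamma(1-p)$ rather than $(-1)^{n}p/\Gamma(1-p)$), which cancels against the factor $(-1)^{n+1}$ arising from $\Gamma(n+2-p)/\Gamma(1-p)=(-1)^{n+1}\prod_{k=1}^{n+1}(p-k)$ and therefore does not affect your correctly stated final identity.
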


\begin{proof}
Similarly as before, we have
\begin{align*}
-\int_{-1}^{0} x^n&\cdot\frac{p\sin(p\pi)}{\pi}\left(\frac{1+x}{-x}\right)^{p-1}\,dx\\
&=(-1)^{n+1}\frac{p\sin(p\pi)}{\pi} \int_{0}^{1}y^{n+1-p}(1-y)^{p-1}\,dy\\
&=(-1)^{n+1}\frac{p \sin(p\pi)}{\pi}\frac{\Gamma(n+2-p)\Gamma(p)}{\Gamma(n+2)}\\
&=(-1)^{n+1}\frac{p \sin(p\pi)}{\pi}\frac{(n+1-p)(n-p)\ldots(1-p)\Gamma(1-p)\Gamma(p)}{(n+1)!}\\
&=\frac{p(p-1)\ldots(p-n-1)}{(n+1)!}
=(n+2)\binom{p}{n+2}.
\end{align*}
From (\ref{integralsequenceb}) we see that the sequence $b_{n}(p):=(n+2)\binom{p}{n+2}$ is positive definite
for $1<p<2$. We have also $b_{n}(0)=b_{n}(1)=0$ for $n\geq0$, $b_{0}(2)=2$ and $b_{n}(2)=0$ for $n\geq1$, so that the sequences $b_n(0)$, $b_n(1)$ and $b_n(2)$ are positive definite too. On the other hand, if the sequence $b_n(p)$ is positive definite, then $b_{0}(p)=p(p-1)\geq0$ and
\[
b_{0}(p)b_{2}(p)-b_{1}(p)^2=\frac{1}{12}p^3(p-1)^2(2-p)\geq0,
\]
which implies that $p\in \{0\}\cup [1,2]$.
\end{proof}

Combining Proposition~\ref{sd_vs_cumulants} with Proposition~\ref{prop:binom n+2,p,n+2} we obtain

\begin{thm}\label{thm:selfdecpp}
The Fuss-Catalan distribution $\mu(p,p)$ is freely self-decomposable if and only if $1\leq p \leq 2$.
\end{thm}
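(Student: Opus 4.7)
The plan is to apply Proposition~\ref{sd_vs_cumulants}\,(\ref{aaa}) directly, since $\mu(p,p)$ is compactly supported. By~(\ref{freecumulants of mu(p,r)}) the free cumulants are $r_n(\mu(p,p))=A_n(0,p)=\binom{p}{n}$, so free self-decomposability of $\mu(p,p)$ is equivalent to conditional positive definiteness of the sequence $\bigl\{n\binom{p}{n}\bigr\}_{n=1}^{\infty}$. By the remark recalled just before Proposition~\ref{sd_vs_cumulants}, this in turn is equivalent to the positive definiteness of the shifted sequence $\bigl\{(n+2)\binom{p}{n+2}\bigr\}_{n=0}^{\infty}$.

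Next I would invoke Proposition~\ref{prop:binom n+2,p,n+2}, which characterizes positive definiteness of $b_n(p):=(n+2)\binom{p}{n+2}$: it holds iff $p\in\{0\}\cup[1,2]$. Intersecting with the standing range $p\geq 1$ for the Fuss--Catalan distribution $\mu(p,p)$, the admissible parameters collapse to $1\leq p\leq 2$. For the boundary case $p=1$ one should also note that $\mu(1,1)=\delta_1$ is trivially freely self-decomposable (one may take $\rho_c=\delta_{1-c}$ in~(\ref{self-decomposabledefinition})), which is consistent with the statement.

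Putting the two steps together proves the ``if'' direction: for $1\leq p\leq 2$, Proposition~\ref{prop:binom n+2,p,n+2} gives the integral representation for $b_n(p)$ with a non-negative density on $(-1,0)$, hence $\{b_n(p)\}$ is positive definite, whence $\mu(p,p)$ is freely self-decomposable by Proposition~\ref{sd_vs_cumulants}\,(\ref{aaa}). Conversely, if $\mu(p,p)$ is freely self-decomposable then Proposition~\ref{sd_vs_cumulants}\,(\ref{BB}) forces $\{b_n(p)\}$ to be positive definite, and the $2\times 2$ minor argument in the proof of Proposition~\ref{prop:binom n+2,p,n+2} (namely $b_0(p)\geq 0$ and $b_0(p)b_2(p)-b_1(p)^2\geq 0$) restricts $p$ to $\{0\}\cup[1,2]$, i.e.\ to $[1,2]$ under the hypothesis $p\geq 1$.

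There is essentially no remaining obstacle, since all the nontrivial work has already been carried out in Proposition~\ref{prop:binom n+2,p,n+2}. The only point that requires minor care is making sure the equivalence ``conditionally positive definite for $\{n r_n\}_{n\geq 1}$ $\iff$ positive definite for $\{(n+2)\binom{p}{n+2}\}_{n\geq 0}$'' is invoked in the correct direction, and that the degenerate case $p=1$ (where $\mu(p,p)$ is a point mass rather than absolutely continuous) is acknowledged separately.
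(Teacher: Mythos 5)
Your proposal is correct and follows essentially the same route as the paper: the paper's proof is exactly the combination of Proposition~\ref{sd_vs_cumulants} (the cumulant criterion, applicable since $\mu(p,p)$ is compactly supported) with Proposition~\ref{prop:binom n+2,p,n+2} (positive definiteness of $\{(n+2)\binom{p}{n+2}\}_{n\geq 0}$ iff $p\in\{0\}\cup[1,2]$). Your extra remarks on the shift defining conditional positive definiteness and on the degenerate case $p=1$ are consistent with the paper and introduce no gap.
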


\subsection{Free cumulant transform and nonregularity of $\mu(p,p)$}
From the binomial expantion, the free cumulant transform $R_{\mu(p,p)}$ of $\mu(p,p)$ is given by
\begin{equation*}
R_{\mu(p,p)}(z)=\sum_{n=1}^\infty A_n(0,p)z^n=(1+z)^{p}-1.
\end{equation*}
Since $\mu(p,p)$ is freely infinitely divisible (even freely self-decomposable) for $1\leq p\leq2$, its free cumulant transform should be written by the free L\'{e}vy-Khintchine representation \eqref{eqno1}.
We give a free characteristic triplet of $\mu(p,p)$ for $1\leq p \leq2$. In particular $R_{\mu(1,1)}(z)=z$ and $R_{\mu(2,2)}(z)=2z+z^2$,
so in these cases the free L\'{e}vy-Khintchine triples are $(0,1,0)$ and $(1,2,0)$.

Now we assume that $1< p< 2$.
We will apply the \emph{Gauss hypergeometric series} which is defined by 
\begin{equation*}
_2F_1(a,b,c;z):=\sum_{n=0}^\infty \frac{(a)_n(b)_n}{(c)_n}\frac{z^n}{n!}, 
\end{equation*}
where $a,b,c$ are real parameters, $c\neq 0,-1,-2,\dots$
and $(a)_n$ denotes the \textit{Pochhammer symbol}: $(a)_n:=a(a+1)\dots (a+n-1)$, $(a)_0:=1$, see \cite{AAR99,OLBC}. The series is absolutely convergent for $|z|<1$.
The function $_2F_1(a,b,c;z)$ is the unique solution $f(z)$ which is analytic at $z=0$ with $f(0)=1$, of the following equation:
\begin{align}\label{eq:diffeq}
z(1-z)f''(z)+\left( c-(a+b+1)z \right) f'(z)-abf(z)=0.
\end{align}
Moreover, if $\Re(c)>\Re(b)>0$ and $|z|<1$ then we have the following integral representation:
\begin{equation}\label{eq:integral rep}
_2F_1(a,b,c;z)=\frac{1}{B(b,c-b)}\int_0^1 x^{b-1}(1-x)^{c-b-1} (1-zx)^{-a}dx,
\end{equation}
where $B(t,s)$ is the Beta function. We give an explicit formula for a special choice of $a,b,c$.

\begin{lem}\label{lem:Gaussformula} For $s>0$, $s\neq 1,2$ we have
\begin{equation}\label{2F1formula}
_2F_1(1,s,3;z)=-\frac{2\left((s-2)z+1-(1-z)^{2-s}\right)}{(s-2)(s-1)z^2}.
\end{equation}
\end{lem}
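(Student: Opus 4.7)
The plan is to exploit the fact that with $a=1$ the hypergeometric series collapses enough to be identified as a twofold antiderivative of a plain binomial series. Starting from the definition,
\[
{}_2F_1(1,s,3;z)=\sum_{n=0}^{\infty}\frac{(1)_n (s)_n}{(3)_n}\frac{z^n}{n!}=\sum_{n=0}^{\infty}\frac{(s)_n}{(3)_n}z^n,
\]
because $(1)_n=n!$. Since $(3)_n=(n+2)!/2$, this becomes
\[
{}_2F_1(1,s,3;z)=\frac{2}{z^2}\sum_{n=0}^{\infty}\frac{(s)_n}{(n+2)!}z^{n+2},
\]
and the remaining sum $S(z):=\sum_{n\ge 0}\frac{(s)_n}{(n+2)!}z^{n+2}$ satisfies $S(0)=S'(0)=0$ together with $S''(z)=\sum_{n\ge 0}\frac{(s)_n}{n!}z^n=(1-z)^{-s}$ for $|z|<1$.

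Next I would evaluate $S(z)=\int_0^z\!\!\int_0^w(1-u)^{-s}\,du\,dw$ explicitly. The inner antiderivative, using $s\ne 1$, equals $\frac{(1-w)^{1-s}-1}{s-1}$; integrating once more, using $s\ne 2$, gives
\[
S(z)=\frac{(1-z)^{2-s}-1-(s-2)z}{(s-1)(s-2)}.
\]
Substituting back into the expression for ${}_2F_1(1,s,3;z)$ and rearranging signs produces the stated closed form.

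The argument is essentially a routine two-step integration, so no substantial obstacle appears; the exclusions $s\ne 1,2$ are dictated precisely by the two denominators that arise in the antiderivatives, matching the hypotheses of the lemma. Termwise integration inside $|z|<1$ is justified by absolute convergence of the binomial series, and analytic continuation extends the identity to the full domain on which both sides are defined.
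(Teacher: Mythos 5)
Your proof is correct, but it proceeds differently from the paper. The paper takes the closed-form expression as a candidate $f_s(z)$, checks $f_s(0)=1$, computes $f_s'$ and $f_s''$ explicitly, and verifies that $f_s$ satisfies the hypergeometric differential equation \eqref{eq:diffeq}, concluding by the uniqueness of the solution analytic at $z=0$ with value $1$. You instead \emph{derive} the formula: since $(1)_n=n!$ and $(3)_n=(n+2)!/2$, you write ${}_2F_1(1,s,3;z)=\tfrac{2}{z^2}S(z)$ with $S(0)=S'(0)=0$ and $S''(z)=(1-z)^{-s}$ by the binomial series, then integrate twice, the exclusions $s\neq 1,2$ arising exactly from the two denominators $s-1$ and $s-2$. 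Your route is more elementary and constructive (no need to guess the answer or differentiate it twice, and termwise manipulation inside $|z|<1$ is all that is required), whereas the paper's verification-by-ODE is shorter to state once the candidate is known and rests on the standard characterization of ${}_2F_1$ as the unique solution of \eqref{eq:diffeq} analytic at the origin. One cosmetic remark: the identity inside the unit disc is all the lemma asserts and all that is used later (the paper applies it together with the Euler integral representation, valid for $|z|<1$), so your closing sentence about analytic continuation, while harmless, is not needed.
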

\begin{proof}

Let $s>0$, $s\neq 1,2$ and denote by $f_s(z)$ the right hand side of \eqref{2F1formula}. This function is analytic at $z=0$ and it is easy to check that $f_s(0)=1$. Then we have
\begin{align*}
f_s'(z)&=\frac{2\left( 2(1-z)^{2-s}+(2-s)z(1-z)^{1-s}- 3(2-s)z  +2\right)}{(s-2)(s-1)z^3},\\
f_s''(z)&=\frac{-2\left(6(1-z)^{2-s}+4(2-s)z(1-z)^{1-s}+(2-s)(1-s)z^2(1-z)^{-s}-6(2-s)z+6\right)}{(s-2)(s-1)z^4},
\end{align*}
and one can check that the differential equation \eqref{eq:diffeq} satisfied.
This concludes the proof.
\end{proof}

For $1<p<2$, we get the free L\'{e}vy-Khintchine representation of $\mu(p,p)$.

\begin{prop}\label{prop:LK p,p}
For $1<p<2$,  $z\in\mathbb{C}^-$, we have
\begin{equation}\label{R-pp}
R_{\mu(p,p)}(z)=pz+\int_\mathbb{R}\left( \frac{1}{1-zx}-1-zx\mathbf{1}_{[-1,1]}(x)\right)\times \left(-\frac{\sin(p\pi)}{\pi|x|}\right) \left(\frac{1+x}{-x}\right)^p \mathbf{1}_{(-1,0)}(x)dx,
\end{equation}
\end{prop}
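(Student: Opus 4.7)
The free cumulant transform is
$$R_{\mu(p,p)}(z)=(1+z)^p-1=pz+\sum_{n=2}^{\infty}\binom{p}{n}z^n,$$
so my plan is to expand the candidate integral as this same power series for $|z|$ small and then extend the identity to $\C^-$ by analytic continuation. Set
$$\nu(\rmd x):=-\frac{\sin(p\pi)}{\pi|x|}\Bigl(\tfrac{1+x}{-x}\Bigr)^p\mathbf{1}_{(-1,0)}(x)\,\rmd x.$$
I first record that $\nu$ is a positive Lévy measure: positivity is automatic since $\sin(p\pi)<0$ on $(1,2)$, and the substitution $y=-x$ reduces the Lévy condition to the Beta integral
$$\int_{\R}x^2\,\nu(\rmd x)=-\frac{\sin(p\pi)}{\pi}\int_0^1 y^{1-p}(1-y)^p\,\rmd y=-\frac{\sin(p\pi)}{\pi}B(2-p,p+1)<\infty.$$

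For $|z|<1$ and $x\in(-1,0)$, the support of $\nu$ lies in $[-1,1]$, so
$$\frac{1}{1-zx}-1-zx\,\mathbf{1}_{[-1,1]}(x)=\sum_{n=2}^{\infty}(zx)^n,$$
and dominated convergence (with dominator $|zx|^2/(1-|z|)$, which is $\nu$-integrable by the previous step) permits termwise integration. Since $|x|=-x$ on $\operatorname{supp}\nu$, a sign computation gives $x^n\,\nu(\rmd x)=\tfrac{x^{n-1}\sin(p\pi)}{\pi}\bigl(\tfrac{1+x}{-x}\bigr)^p\,\rmd x$; applying Proposition~\ref{prop:binom p,n+2} with exponent $n-2\ge 0$ then yields
$$\int_{-1}^{0}x^n\,\nu(\rmd x)=\int_{-1}^{0}x^{n-2}\cdot\frac{x\sin(p\pi)}{\pi}\Bigl(\tfrac{1+x}{-x}\Bigr)^p\,\rmd x=\binom{p}{n}\qquad(n\geq 2).$$
Consequently the integral in \eqref{R-pp} equals $\sum_{n\ge 2}\binom{p}{n}z^n$, and adding $pz$ recovers $(1+z)^p-1=R_{\mu(p,p)}(z)$ in a neighborhood of the origin.

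It remains to extend the identity to all of $\C^-$. The left-hand side extends analytically by Theorem~\ref{BV_char_of_FID}, using that $\mu(p,p)$ is freely infinitely divisible (Theorem~\ref{thm:muppinfdiv}). The right-hand side extends by a standard differentiation-under-the-integral argument: using $\bigl|\tfrac{1}{1-zx}-1-zx\bigr|=\tfrac{|zx|^2}{|1-zx|}$, one dominates the integrand locally uniformly in $z\in\C^-$ by combining the Taylor bound $\tfrac{|zx|^2}{|1-zx|}\leq 2|z|^2 x^2$ for $x$ near $0$ with the lower bound $|1-zx|\geq|x|\,|\Im z|$ away from $0$. The identity theorem then concludes the proof.

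\textbf{Main obstacle.} Nothing genuinely deep: the heart of the argument is packaged in Proposition~\ref{prop:binom p,n+2}. The only subtlety is that $\nu$ has a non-integrable singularity at $0$ (the density behaves like $|x|^{-p-1}$), so justifying analyticity of the RHS on $\C^-$ really does require splitting the integral near and away from $0$ as above, rather than appealing to a single crude bound. The sign bookkeeping ($|x|=-x$, $\sin(p\pi)<0$) is also a point where a lapse would be easy but harmless to check.
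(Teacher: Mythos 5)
Your proof is correct, but the core computation runs along a genuinely different line from the paper's. You establish the identity near $z=0$ by expanding $\frac{1}{1-zx}-1-zx=\sum_{n\ge2}(zx)^n$ on the support of the measure, integrating termwise (with the dominated-convergence justification via $\int x^2\,\nu(\rmd x)=-\tfrac{\sin(p\pi)}{\pi}B(2-p,p+1)<\infty$), and recognizing each coefficient as $\binom{p}{n}$ through the integral representation of Proposition~\ref{prop:binom p,n+2}; the paper instead substitutes $x\to-x$, invokes the Euler integral representation \eqref{eq:integral rep} to identify the integral as $B(2-p,1+p)\,{}_2F_1(1,2-p,3;-z)$, and then uses the closed form of Lemma~\ref{lem:Gaussformula} to land directly on $(1+z)^p-pz-1$. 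Your route is more elementary in that it bypasses the hypergeometric machinery entirely and recycles the binomial integral already proved for the free-cumulant positivity argument, at the cost of a Fubini step you rightly justify; the paper's route gives a one-line closed-form evaluation and sets up the parallel computation later used for $\mu(p,p-1)$ in Corollary~\ref{lem:LK p,p-1}. The continuation step is the same in both (free infinite divisibility of $\mu(p,p)$ plus the identity theorem), and you in fact make explicit the analyticity of the right-hand side on $\C^-$, which the paper leaves implicit.
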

\begin{proof}
For $1<p<2$ and for $z$ in a disc around $0$, by applying \eqref{eq:integral rep} and Lemma~\ref{lem:Gaussformula}, we have
\begin{align*}
\int_\mathbb{R}&\left( \frac{1}{1-zx}-1-zx\mathbf{1}_{[-1,1]}(x)\right)\times \left(-\frac{\sin(p\pi)}{\pi|x|}\right) \left(\frac{1+x}{-x}\right)^p \mathbf{1}_{(-1,0)}(x)dx\\
&=z^2\left(-\frac{\sin(p\pi)}{\pi}\right) \int_0^1 x^{1-p}(1-x)^p(1+zx)^{-1}dx\\
&=z^2\left(-\frac{\sin(p\pi)}{\pi}\right) B(2-p,1+p) _2F_1(1,2-p,3;-z)\\
&=z^2\left(-\frac{\sin(p\pi)}{\pi}\right) \left(-\frac{\pi(p-1)p}{2\sin(p\pi)}\right)\left(\frac{2((1+z)^p-pz-1)}{(p-1)pz^2}\right)\\
&=(1+z)^p-pz-1\\
&=R_{\mu(p,p)}(z)-pz.
\end{align*}
Therefore the free cumulant transform of $\mu(p,p)$ has the representation \eqref{R-pp} on a neighbourhood $0$ for $1<p<2$. Since $\mu(p,p)$ is freely infinitely divisible for all $1<p < 2$, its free cumulant transform has an analytic continuation to $\mathbb{C}^-$ and therefore the formula $\eqref{R-pp}$ holds for all $z\in \mathbb{C}^-$ by using the identity theorem of complex analytic functions.
\end{proof}

\begin{rem}\label{rem:LK p,p}
By Proposition~\ref{prop:LK p,p}, the free L\'{e}vy measure $\nu_{\mu(p,p)}$ of $\mu(p,p)$ is of the form
\begin{align*}\label{freeLevy1}
\nu_{\mu(p,p)}(dx)=\frac{k_p(x)}{|x|}dx,
\end{align*}
where
\begin{align*}
k_p(x)=\left(-\frac{\sin(p\pi)}{\pi}\right) \left(\frac{1+x}{-x}\right)^p \mathbf{1}_{(-1,0)}(x)dx.
\end{align*}
Note that for $1<p<2$ the function $k_p$ is non-decreasing on $(-1,0)$, which is an another proof of free self-decomposability of $\mu(p,p)$.
Moreover, the free L{\'e}vy mesure $\nu_{\mu(p,p)}$ for $1< p<2$ has compact support so that it is not
in free counterpart of so-called Thorin class.
Note also that, surprisingly, the support of $\mu(p,p)$ is contained in the positive half-line, while the support of the L{\'e}vy measure $\nu_{\mu(p,p)}$ is contained in negative half-line.
\end{rem}

As an immediate consequence of Proposition \ref{prop:LK p,p} and Theorem~\ref{thm:AHS13} we get

\begin{thm}\label{thm:FR p,p}
If $1<p<2$ then the Fuss-Catalan distribution $\mu(p,p)$ is not free regular.
\end{thm}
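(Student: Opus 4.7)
The plan is to combine the explicit free L\'evy--Khintchine representation from Proposition~\ref{prop:LK p,p} with the characterization of free regular measures in Theorem~\ref{thm:AHS13}, and rely on the uniqueness of the free characteristic triplet to conclude.

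First I would recall that by Theorem~\ref{thm:AHS13}, a freely infinitely divisible measure $\mu$ on $[0,\infty)$ is free regular if and only if its free cumulant transform admits a representation with drift $\eta'\geq 0$ and a free L\'evy measure $\nu$ concentrated on $(0,\infty)$. By the uniqueness of the free characteristic triplet (equivalently of the free generating pair $(\gamma,\sigma)$ in~\eqref{eqno1a}), this is a statement about the support of the L\'evy measure $\nu_\mu$ itself: $\mu$ is free regular iff $\nu_\mu$ is supported on $(0,\infty)$ (and an appropriate drift is nonnegative).

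Next I would read off from Proposition~\ref{prop:LK p,p} and Remark~\ref{rem:LK p,p} that for $1<p<2$ the free L\'evy measure of $\mu(p,p)$ is
\begin{equation*}
\nu_{\mu(p,p)}(dx)=-\frac{\sin(p\pi)}{\pi|x|}\left(\frac{1+x}{-x}\right)^{p}\mathbf{1}_{(-1,0)}(x)\,dx,
\end{equation*}
which is a \emph{nonzero} measure supported in $(-1,0)$: indeed $\sin(p\pi)<0$ for $1<p<2$, so $-\sin(p\pi)/\pi>0$, and $(1+x)/(-x)>0$ on $(-1,0)$, making the density strictly positive there.

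Finally I would conclude: since $\nu_{\mu(p,p)}$ has strictly positive mass on $(-1,0)\subset(-\infty,0)$, the free L\'evy measure of $\mu(p,p)$ is not supported on $(0,\infty)$. By the uniqueness of the free L\'evy--Khintchine representation, $R_{\mu(p,p)}$ cannot be rewritten in the form required by Theorem~\ref{thm:AHS13}, and therefore $\mu(p,p)$ is not free regular for any $1<p<2$. There is essentially no obstacle here beyond invoking uniqueness of the triplet; the whole argument is a direct corollary once the explicit L\'evy measure is in hand from Proposition~\ref{prop:LK p,p}.
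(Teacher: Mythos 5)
Your argument is correct and is essentially the paper's own proof: the paper also deduces the theorem directly from Proposition~\ref{prop:LK p,p} together with Theorem~\ref{thm:AHS13}, since the free L\'evy measure of $\mu(p,p)$ is a nonzero measure supported in $(-1,0)$ and by uniqueness of the free characteristic triplet it cannot satisfy $\nu((-\infty,0])=0$. Your explicit check that $\sin(p\pi)<0$ for $1<p<2$ (so the density is strictly positive on $(-1,0)$) is a worthwhile detail the paper leaves implicit, but the route is the same.
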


Since $\mu(1,1)=\delta_1$ and $\mu(2,2)^{\boxplus t}$ corresponds to the Wigner's semicircle law with mean $2t$ and variance $t$ for all $t>0$, then the measure $\mu(p,p)$ is free regular for $p=1,2$.
\subsection{$\mu(p,p)$ is in the class free $L_1$}

Now we define the following special subclass of all freely self-decomposable distributions.

\begin{defn}
A probability measure $\mu$ on $\mathbb{R}$ is said to be in the class free $L_1$
if $\mu$ is freely self-decomposable and for every $c\in (0,1)$ the measure $\rho_c:=\rho_c(\mu)\in\calP(\mathbb{R})$
in (\ref{self-decomposabledefinition}) is also freely self-decomposable.
\end{defn}

According to Section 3.1, the measure $\mu(p,p)$ is freely self-decomposable for $1\leq p \leq 2$. Therefore for any $c\in (0,1)$ there exists $\rho_{p,c}\in\calP(\mathbb{R})$ such that
\begin{equation}\label{rho_p,c}
\mu(p,p)=D_c(\mu(p,p))\boxplus\rho_{p,c}.
\end{equation}
Now we will prove that $\rho_{p,c}$ is also freely self-decomposable for $1\leq p \leq 2$.

\begin{thm}\label{thm:free L1 (p,p)}
The Fuss-Catalan distribution $\mu(p,p)$ is in the class free $L_1$ for all $1\leq p \leq 2$.
\end{thm}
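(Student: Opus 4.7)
I begin by disposing of the endpoints. For $p=1$, $\mu(1,1)=\delta_1$ yields $R_{\rho_{1,c}}(z)=(1-c)z$, so $\rho_{1,c}=\delta_{1-c}$ is trivially freely self-decomposable. For $p=2$, $R_{\mu(2,2)}(z)=2z+z^2$ yields $R_{\rho_{2,c}}(z)=2(1-c)z+(1-c^2)z^2$, identifying $\rho_{2,c}$ as a shifted semicircle law, which is freely stable and therefore lies in the free $L_1$ class.

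For $1<p<2$, realizing $\mu(p,p)$ as the spectral distribution of a bounded self-adjoint operator in a $II_1$-factor forces (via \eqref{rho_p,c}) the measure $\rho_{p,c}$ to be the distribution of a difference of two bounded free self-adjoint operators, and hence to be compactly supported. By Proposition~\ref{sd_vs_cumulants}, it then suffices to show that the sequence $\{(n+2)\,r_{n+2}(\rho_{p,c})\}_{n=0}^{\infty}$ is positive definite. Since $R_{D_c(\mu)}(z)=R_{\mu}(cz)$, equation \eqref{rho_p,c} gives $r_n(\rho_{p,c})=(1-c^n)\binom{p}{n}$.

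The core computation is then to apply the integral representation of Proposition~\ref{prop:binom n+2,p,n+2} directly to $(n+2)\binom{p}{n+2}$ and, after the substitution $x\mapsto y/c$, to $c^{n+2}(n+2)\binom{p}{n+2}$; combining both integrals over $(-1,0)$ produces
\begin{equation*}
(n+2)\,r_{n+2}(\rho_{p,c})=\int_{-1}^{0}x^n\,W_{p,c}(x)\,dx,
\end{equation*}
with
\begin{equation*}
W_{p,c}(x)=-\frac{p\sin(p\pi)}{\pi}\left[\left(\tfrac{1+x}{-x}\right)^{p-1}-c\left(\tfrac{c+x}{-x}\right)^{p-1}\mathbf{1}_{(-c,0)}(x)\right].
\end{equation*}
Once $W_{p,c}\geq 0$ on $(-1,0)$ is verified, $(n+2)\,r_{n+2}(\rho_{p,c})$ becomes the moment sequence of a positive measure on $(-1,0)$ and is thereby positive definite.

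To see $W_{p,c}\geq 0$: the prefactor $-p\sin(p\pi)/\pi$ is positive for $1<p<2$; on $(-1,-c]$ the indicator vanishes and the bracket equals $\left(\tfrac{1+x}{-x}\right)^{p-1}\geq 0$; on $(-c,0)$ the bracket is nonnegative iff $\left(\tfrac{1+x}{c+x}\right)^{p-1}\geq c$, which holds because $0<c+x<1+x$ on this interval and $p-1>0$ give $\left(\tfrac{1+x}{c+x}\right)^{p-1}>1>c$. The main technical step is the bookkeeping in the change of variables that merges the two integral representations into a single weighted integral over $(-1,0)$; the subsequent pointwise inequality is elementary.
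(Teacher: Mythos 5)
Your proposal is correct, but for the core case $1<p<2$ it takes a genuinely different route from the paper. The paper works with characterization (I) of Section~2.2.1: it reads off the free L\'evy measure of $\rho_{p,c}$ from the explicit L\'evy--Khintchine representation of Proposition~\ref{prop:LK p,p} (which required the hypergeometric identity of Lemma~\ref{lem:Gaussformula}) and checks that the resulting density $k_{p,c}(x)$ is non-decreasing on $(-1,0)$, so that $k_{p,c}(x)\,dx$ is unimodal with mode $0$ and \cite{BNT06} applies. You instead stay with the cumulant criterion (V) of Proposition~\ref{sd_vs_cumulants}, i.e.\ the same tool the paper uses for $\mu(p,p)$ itself in Theorem~\ref{thm:selfdecpp}: from $R_{D_c(\mu)}(z)=R_\mu(cz)$ and \eqref{rho_p,c} you get $r_n(\rho_{p,c})=(1-c^n)\binom{p}{n}$, and merging the integral representation of Proposition~\ref{prop:binom n+2,p,n+2} with its $c$-rescaled version (your substitution produces exactly the factor $c\left(\frac{c+x}{-x}\right)^{p-1}\mathbf{1}_{(-c,0)}(x)$, and the pointwise bound $\left(\frac{1+x}{c+x}\right)^{p-1}>1>c$ is right) exhibits $\{(n+2)r_{n+2}(\rho_{p,c})\}_{n\ge0}$ as the moment sequence of a nonnegative density on $(-1,0)$, hence positive definite. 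What your route buys is that it avoids Proposition~\ref{prop:LK p,p} and the L\'evy-measure machinery altogether, reusing only Proposition~\ref{prop:binom n+2,p,n+2}; what it costs is the extra hypothesis in Proposition~\ref{sd_vs_cumulants}(ii), namely compact support of $\rho_{p,c}$. Your operator argument for this is acceptable once phrased carefully: realize $X\sim D_c(\mu(p,p))$ and $Y\sim\rho_{p,c}$ free (a priori $Y$ only affiliated) in a tracial $W^*$-probability space; since the distributions of $X$ and of $X+Y$ have compact support, both operators are bounded, hence so is $Y=(X+Y)-X$ (note these two are not free of each other, which is irrelevant for boundedness). A cleaner alternative within the paper's own toolkit is to observe that $|r_n(\rho_{p,c})|\le\bigl|\binom{p}{n}\bigr|$ is bounded, so $\rho_{p,c}$ is compactly supported by the cumulant-growth criterion of \cite{BG2006}. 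The endpoint cases $p=1,2$ coincide with the paper's treatment (for $p=2$ you only need that the semicircle cofactor is freely self-decomposable; free stability is more than required).
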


\begin{proof}
For any $p\in[1,2]$, $c\in(0,1)$, we show that $\rho_{p,c}$ in \eqref{rho_p,c}  is freely self-decomposable.
First we consider the cases $p=1,2$. If $p=1$ then
\begin{equation*}
R_{\rho_{1,c}}(z)=R_{\mu(1,1)}(z)-R_{D_c(\mu(1,1))}(z)=z-cz=(1-c)z,
\end{equation*}
for all $z\in\mathbb{C}^-$. Therefore $\rho_{1,c}$ is freely self-decomposable. When $p=2$ we have
\begin{equation*}
R_{\rho_{2,c}}(z)=R_{\mu(2,2)}(z)-R_{D_c(\mu(2,2))}(z)=2(1-c)z+(1-c^2)z^2,
\end{equation*}
for all $z\in\mathbb{C}^-$. Therefore $\rho_{2,c}$ is also freely self-decomposable.

Now assume that $p\in(1,2)$. By \eqref{R-pp}, we have that the free L\'{e}vy measure $\nu_{\rho_{p,c}}$ of $\rho_{p,c}$ is given by
\begin{equation*}
-\frac{\sin(p\pi)}{\pi|x|} \left\{ \left(\frac{1+x}{-x}\right)^p \mathbf{1}_{(-1,0)}(x)- \left(\frac{c+x}{-x}\right)^p\mathbf{1}_{(-c,0)}\right\}dx.
\end{equation*}
Define
\begin{equation*}
k_{p,c}(x):=-\frac{\sin(p\pi)}{\pi}  \left\{ \left(\frac{1+x}{-x}\right)^p \mathbf{1}_{(-1,0)}(x)- \left(\frac{c+x}{-x}\right)^p\mathbf{1}_{(-c,0)}\right\},
\end{equation*}
that is, $\nu_{\rho_{p,c}}(dx)=\frac{k_{p,c}(x)}{|x|}dx$.
It is enough to show that $k_{p,c}(x)$ is non-decreasing on $(-1,0)$.
Put
\[
u_{p,c}(x):=-\frac{\pi}{\sin(p\pi)}k_{p,c}(x).
\]
Then
\begin{equation*}
u_{p,c}'(x)=\frac{p(1+x)^{p-1}}{(-x)^{p+1}}>0
\end{equation*}
for $-1<x \leq -c$ and
\begin{equation*}
u_{p,c}'(x)=\frac{p\{(1+x)^{p-1}-c(c+x)^{p-1}\}}{(-x)^{p+1}}>0
\end{equation*}
for $-c<x<0$.
Hence $u_{p,c}(x)$ is non-decreasing on $(-1,0)$, and therefore so is $k_{p,c}(x)$.
Thus $\rho_{p,c}$ is freely self-decomposable for $p\in(1,2)$ too by \cite{BNT06} or Section 2.2.1.
\end{proof}

\subsection{Unimodality for $\mu(p,p)$}

Unimodality is a remarkable property of classical and freely self-decomposable distributions, namely, every classical or freely self-decomposable distribution is unimodal (see \cite{HT,Yam}). Since $\mu(p,p)$, $p>0$, is absolutely continuous with respect to Lebesgue measure, we consider only
unimodality of the density function $W_{p,p}(x)$. According to Proposition~\ref{Prop:density} we have
\begin{align}\label{pp}
&W_{p,p}(\rho(\varphi)) =
\frac{(\csc((p-1)\varphi))\sin(\varphi)\sin(r \varphi)}{\pi} , \quad 0<\varphi<\frac{\pi}{p}.&
\end{align}
\begin{prop}\label{prop:unimodal p,p}
The Fuss-Catalan distribution $\mu(p,p)$ is unimodal for all $p\geq 1$.
\end{prop}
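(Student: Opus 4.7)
The plan is to exploit the parametric representation (\ref{pp}) of the density. For $p=1$ the measure $\mu(1,1)=\delta_1$ is trivially unimodal, so I may assume $p>1$, in which case $W_{p,p}$ is continuous on its support $[0,p^{p}(p-1)^{1-p}]$ and, by inspection of the formula for $g(\varphi):=W_{p,p}(\rho(\varphi))=\sin\varphi\sin(p\varphi)/(\pi\sin((p-1)\varphi))$, vanishes at both endpoints. Since $\rho$ is a strictly decreasing homeomorphism of $(0,\pi/p)$ onto the interior of the support, it will suffice to show that $g$ attains a unique maximum on $(0,\pi/p)$; pulling back by $\rho^{-1}$ then immediately delivers unimodality of $W_{p,p}$.

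The key algebraic step is the identity $\sin((p-1)\varphi)=\sin(p\varphi)\cos\varphi-\cos(p\varphi)\sin\varphi$, which rewrites $g$ in the compact form
\[
\pi g(\varphi)=\frac{1}{\cot\varphi-\cot(p\varphi)}.
\]
Consequently, unimodality of $g$ is equivalent to showing that $h(\varphi):=\cot\varphi-\cot(p\varphi)$, which is positive on $(0,\pi/p)$ and blows up at both endpoints, has a unique interior minimum. Differentiating, $h'(\varphi)=p\csc^{2}(p\varphi)-\csc^{2}\varphi$, so the critical points are exactly the solutions of $\psi(\varphi)=\sqrt{p}$, where $\psi(\varphi):=\sin(p\varphi)/\sin\varphi$.

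The core analytic step will be to prove that $\psi$ is strictly decreasing on $(0,\pi/p)$, with $\psi(0^{+})=p$ and $\psi((\pi/p)^{-})=0$; since $\sqrt{p}\in(0,p)$ this gives a unique $\varphi_{0}\in(0,\pi/p)$ with $\psi(\varphi_{0})=\sqrt{p}$, and $h'$ then changes sign from negative to positive at $\varphi_{0}$, so $h$ achieves its minimum only at $\varphi_{0}$. To establish the monotonicity of $\psi$, I write $\psi'(\varphi)=N(\varphi)/\sin^{2}\varphi$ with $N(\varphi):=p\cos(p\varphi)\sin\varphi-\sin(p\varphi)\cos\varphi$, observe $N(0)=0$, and compute $N'(\varphi)=-(p^{2}-1)\sin(p\varphi)\sin\varphi$, which is strictly negative on $(0,\pi/p)$ since $p>1$ and both $\sin\varphi$ and $\sin(p\varphi)$ are positive there; hence $N<0$ and $\psi'<0$ throughout $(0,\pi/p)$.

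No step is a genuine obstacle; once the identity $\pi g=1/(\cot\varphi-\cot(p\varphi))$ is noticed, the argument reduces to a one-variable calculus check on $\psi$. I note in passing that in the range $1\le p\le 2$ one could instead combine Theorem~\ref{thm:selfdecpp} with the general fact that freely self-decomposable distributions are unimodal, but this approach cannot reach $p>2$, where $\mu(p,p)$ is not even freely infinitely divisible by Theorem~\ref{thm:muppinfdiv}. The direct analytic treatment above is therefore needed to obtain unimodality uniformly for all $p\ge 1$.
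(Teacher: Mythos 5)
Your proof is correct, but it follows a different route from the paper's. The paper splits into two cases: for $1\le p\le 2$ it invokes free self-decomposability (Theorem~\ref{thm:selfdecpp}) together with the Hasebe--Thorbj{\o}rnsen theorem that freely self-decomposable laws are unimodal, and only for $p>2$ does it argue analytically, computing
$\frac{dg}{d\varphi}=\pi^{-1}\bigl(\sin^2(p\varphi)-p\sin^2\varphi\bigr)\csc^2((p-1)\varphi)$
and showing that $\sin(p\varphi)=\sqrt{p}\,\sin\varphi$ has a unique root in $(0,\pi/p)$ by a sign/concavity argument on $h(\varphi)=\sin(p\varphi)-\sqrt{p}\sin\varphi$. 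You reach exactly the same critical-point equation, but you treat all $p>1$ uniformly: rewriting $\pi g(\varphi)=1/(\cot\varphi-\cot(p\varphi))$ and proving that $\psi(\varphi)=\sin(p\varphi)/\sin\varphi$ is strictly decreasing from $p$ to $0$ (via $N'(\varphi)=-(p^2-1)\sin(p\varphi)\sin\varphi<0$, $N(0)=0$) immediately gives a unique crossing of $\sqrt{p}$ and hence a single sign change of $g'$. What your approach buys is twofold: it does not need the free-probability input for $1\le p\le2$ (so unimodality is obtained by elementary calculus throughout), and it replaces the paper's concavity step — whose stated justification, $\sin(p\varphi)\ge\sin\varphi$ on all of $(0,\pi/p)$, in fact fails near $\varphi=\pi/p$, where $h''>0$ — by a clean monotonicity lemma for $\psi$. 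What the paper's route buys is the connection to the structural result (self-decomposability implies unimodality) on the range $1\le p\le 2$, which is of independent interest there. All your individual steps check out: the identity $\sin((p-1)\varphi)=\sin(p\varphi)\cos\varphi-\cos(p\varphi)\sin\varphi$, positivity and divergence of $\cot\varphi-\cot(p\varphi)$ at both endpoints, the formula $h'(\varphi)=p\csc^2(p\varphi)-\csc^2\varphi$, and the transfer of unimodality through the decreasing homeomorphism $\rho$.
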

\begin{proof}
If $1\leq p\leq2$ then $\mu(p,p)$ is freely self-decomposable, therefore, by \cite[Theorem 1]{HT}, it is unimodal.
Now assume that $p>2$. Denote by $g(\varphi)$ the right hand side of (\ref{pp}). Then
\begin{align*}
\frac{dg(\varphi)}{d\varphi} = \frac{\left(\sin ^2(p \varphi )-p\sin ^2(\varphi )\right) \csc^2(\varphi(1 -p))}{\pi },\quad 0<\varphi <\frac{\pi}{p}.
\end{align*}
Here we consider $h(\varphi) = \sin(p \varphi ) - \sqrt{p}\sin(\varphi )$.
$h'(\varphi) =p \cos(p \varphi ) - \sqrt{p}\cos(\varphi )$ and $h''(\varphi) = - p^{2} \sin(p \varphi ) + \sqrt{p}\sin(\varphi )$.
$h'(0) =p - \sqrt{p}>0$, $h'(\frac{\pi}{p})=-p + \sqrt{p}\cos(\frac{\pi}{p})<0 $ and $h''(\varphi) <0$ on $0<\varphi <\frac{\pi}{p}$
because $\sin(p\varphi)\geq\sin(\varphi)$ on $0<\varphi <\frac{\pi}{p}$.
So this means that $\sin(p \varphi ) = \sqrt{p}\sin(\varphi )$ has only one solution in $0<\varphi <\frac{\pi}{p}$ for $p>2$
and so does $\frac{dg(\varphi)}{d\varphi} $.
As a result we obtain that $\mu(p,p)$ is unimodal.
\end{proof}


\section{General Fuss-Catalan distributions $\mu(p,r)$}
In this section, we discuss the general Fuss-Catalan disribution $\mu(p,r)$ and give a proof of Theorem \ref{thm:main}. In Section 4.1, we study free infinite divisibility for $\mu(p,r)$. In Section 4.2, we obtain a L\'{e}vy-Khintchine representation of $\mu(p,r)$ for $r<p$. In Section 4.3, we investigate free self-decomposability for $\mu(p,r)$ via its L\'{e}vy measure. In Sections 4.4, we discuss free regularity of $\mu(p,r)$. In Section 4.5, we study unimodality for four special families Fuss-Catalan distributions.

\subsection{Free infinite divisibility for $\mu(p,r)$}
The characterization of those $\mu(p,r)$ which are freely infinitely divisible in \cite[Corollary 7.1]{MP2014}, was not quite correct
and overlooked the distributions $\mu(p,p)$, $1\leq p\leq 2$. Here we provide
complete description and proof.

\begin{thm}\label{thm:FID p,r}
Suppose that $p\geq1$, $0<r\leq p$. The Fuss-Catalan distribution $\mu(p,r)$ is freely infinitely divisible if and only if either $0<r\leq \min\{p/2, p-1\}$ or $1\leq p=r \leq 2$.
\end{thm}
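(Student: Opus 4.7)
The plan is to split the equivalence into sufficiency and necessity, and in each direction to treat the diagonal case $p=r$ separately from the off-diagonal case $r<p$.

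For sufficiency I would assemble results already available. If $0<r\leq\min\{p/2,p-1\}$, then $p-r\geq 1$ and $0<r\leq p-r$, so by the positive-definiteness characterization of $A_k(\cdot,\cdot)$ recalled in the introduction, the sequence $\{A_k(p-r,r)\}_{k\geq 1}$ is positive definite: it is literally the moment sequence of the probability measure $\mu(p-r,r)$. By \eqref{freecumulants of mu(p,r)} these are exactly the free cumulants of $\mu(p,r)$, which are therefore conditionally positive definite, so $\mu(p,r)$ is freely infinitely divisible by Remark~\ref{PD}; this is essentially \cite{Mlot}. The remaining sufficiency case $1\leq p=r\leq 2$ is Theorem~\ref{thm:muppinfdiv}(1).

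For necessity, suppose $(p,r)$ lies outside both good regions; the task is to show that the free cumulant sequence $\{A_k(p-r,r)\}_{k\geq 1}$ fails to be conditionally positive definite. The diagonal sub-case $p=r>2$ follows at once from Theorem~\ref{thm:muppinfdiv}(1): Proposition~\ref{prop:binom p,n+2} supplies the explicit negative $2\times 2$ Hankel minor $\binom{p}{2}\binom{p}{4}-\binom{p}{3}^2=\tfrac{1}{144}(2-p)(1+p)p^2(p-1)^2<0$. The off-diagonal sub-case $r<p$ with $r>\min\{p/2,p-1\}$ was essentially treated in \cite[Corollary~7.1]{MP2014}; the strategy I would follow is again to exhibit an explicit negative principal minor of the shifted Hankel matrix $\bigl(A_{i+j+2}(p-r,r)\bigr)_{i,j\geq 0}$. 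A natural first candidate is $D_2(p,r):=r_2r_4-r_3^2$, which after using the product formula for $A_k$ reduces to $\tfrac{r^2}{144}P(p,r)$ for a polynomial $P$; for instance on the slice $r=p-1$ one computes $P=(r+1)^2(r+2)(1-r)$, strictly negative for $r>1$.

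The main obstacle is that $D_2$ alone does not uniformly detect non-CPD throughout the off-diagonal bad region: when $p$ is large and $r$ exceeds $p/2$ only slightly, direct substitution shows $D_2(p,r)$ can remain positive, so $D_2<0$ cannot be the only ingredient. To finish, one either performs a case split and passes to a higher-order (typically $3\times 3$) Hankel minor in the large-$p$ wedge adjacent to $r=p/2$, or else invokes the analytic characterization of Theorem~\ref{BV_char_of_FID} applied to the Voiculescu transform of $\mu(p,r)$ (obtained from the moment generating function identity $M_{\mu(p,r)}(z)=B_p(z)^r$ with $B_p$ the Fuss-Catalan function $B_p(z)=1+zB_p(z)^p$): tracing the branch structure of $B_p(z)^r-1$ exhibits points of $\C^+$ at which the analytic extension of $\varphi_{\mu(p,r)}$ must leave $\C^-\cup\R$. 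This delicate wrinkle is already handled in \cite{MP2014}, so the genuinely new content of the present theorem beyond that reference is the diagonal correction $1\leq p=r\leq 2$ supplied here by Theorem~\ref{thm:muppinfdiv}.
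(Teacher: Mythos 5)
Your sufficiency argument and the diagonal case $p=r>2$ coincide with the paper's. The genuine gap is the necessity for the off-diagonal bad region $\min\{p/2,p-1\}<r<p$: you observe yourself that the single Hankel minor $D_2=r_2r_4-r_3^2$ does not stay negative throughout this region, and you then only gesture at two possible completions (a case split with a higher-order minor near $r=p/2$, or a branch analysis of $B_p(z)^r$ via Theorem~\ref{BV_char_of_FID}) without carrying out either, ultimately deferring to \cite[Corollary~7.1]{MP2014}. That deferral is exactly what the present theorem is meant to repair: the paper states that the characterization there was not quite correct and that a complete description \emph{and proof} is being supplied, so the off-diagonal necessity cannot simply be outsourced to that reference. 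As written, your argument is incomplete precisely where the new work is required, and your analytic alternative contains no actual demonstration that the extension of $\varphi_{\mu(p,r)}$ leaves $\C^-\cup\R$.

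For comparison, the paper closes this region with two concrete arguments you would need (or an equivalent). For $p-1<r<p$ it writes $A_n(p-r,r)=\frac{r}{n!}\prod_{i=1}^{n-1}\bigl(n(p-r-1)+r+i\bigr)$ and, setting $u:=r+1-p\in(0,1)$, notes that $A_n(p-r,r)\le 0$ exactly when $nu-r>1$ and $\lfloor nu-r\rfloor$ is odd; examining the residues $(nu-r)\bmod 2$ over even $n$ (a coset of a finite subgroup of $[0,2)$ when $u$ is rational, a dense subset when $u$ is irrational) produces an even $n$ with $A_n(p-r,r)\le 0$, which is incompatible with conditional positive definiteness of the free cumulant sequence. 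For $p>2$ and $p/2<r<p-1$ it uses the integral representation $A_n(p-r,r)=\int x^nW_{p-r,r}(x)\,dx$ with $p-r>1$ but $r>p-r$, so the signed density $W_{p-r,r}$ is negative on some interval, hence so is $x^2W_{p-r,r}(x)$, and by Lemma~7.1 of \cite{MP2014} the shifted sequence $\{A_{n+2}(p-r,r)\}$ is not positive definite. The remaining slice $r=p-1$, $p>2$ is handled by exactly the $2\times2$ minor you computed. Unless you supply arguments of this kind (or make one of your two sketched routes precise), the necessity direction of the theorem is not proved.
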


\begin{proof}
From Corollary~5.1 in \cite{Mlot} and Theorem~\ref{thm:muppinfdiv}
we know that if either $0<r\leq \min\{p/2, p-1\}$ or $1\leq p=r\leq2$ then $\mu(p,r)$ is freely infinitely divisible.

On the other hand, by Theorem~\ref{thm:muppinfdiv}, $\mu(p,p)$ is not freely infinitely divisible for $p>2$.
Similarly, for $\mu(p,p-1)$ the free cumulants are $A_n(1,p-1)=\binom{n-2+p}{n}$.
Since
\[
A_2(1,p-1)A_4(1,p-1)-A_3(1,p-1)^2=\frac{1}{144}p^2(p-1)^2(p+1)(2-p),
\]
$\mu(p,p-1)$ is not freely infinitely divisible for $p>2$.

Now assume that $p-1<r<p$.
We are going to show that there exists even $n\geq2$ such that $A_{n}(p-r,r)\leq 0$.
We have
\begin{align*}
r_n(p,r)&=A_{n}(p-r,r)=\frac{r}{n(p-r)+r}\binom{n(p-r)+r}{n}\\
&=\frac{r}{n!}\prod_{i=1}^{n-1}(n(p-r)+r-i)
=\frac{r}{n!}\prod_{i=1}^{n-1}(n(p-r-1)+r+i).
\end{align*}
Putting $u:=r+1-p$ we have $0<u<1$ and $A_{n}(p-r,r)\leq0$ if and only if
$nu-r>1$ and the floor $\lfloor nu-r\rfloor$ is odd. Consider the set
\[
G:=\{(nu-r)\!\!\!\mod 2:n\in\mathbb{N},\hbox{ $n$ even, }nu>r+1\}\subseteq[0,2).
\]
If $u$ is rational then $G$ is a coset of a finite subgroup
of the group $\big([0,2),0,+_{\!\!\!\!\mod2}\big)$,
otherwise $G$ is a dense subset of $[0,2)$.
It is easy to see in the former case that for some
even $n$ we have $(nu-r)\!\!\!\mod2\in[1,2)$, which implies $A_{n}(p-r,r)\leq0$.
In the latter we can find $n$ such that $n$ is even, $1<(nu-r)\!\!\!\mod2<2$
and then $A_{n}(p-r,r)<0$.

Finally, assume that $p>2$ and $p-1<r<p/2$. Then $p-r>1$ and the free cumulants $A_n(p-r,r)$
admit integral representation (\ref{eg:akprintegral}), with $p-r$ instead of $p$.
Since $r>p-r$, $W_{p-r,r}(x)$ is negative on some interval, and so is $x^2 W_{p-r,r}(x)$.
This implies, that the sequence $A_{n+2}(p-r,r)$ is not positive definite (c.f. Lemma~7.1 in \cite{MP2014}).
\end{proof}

\subsection{Free cumulant transform of $\mu(p,r)$}
In this section, we give the free L\'{e}vy-Khintchine representation of $\mu(p,r)$ to discuss free self-decomposability and free regularity for $\mu(p,r)$.

\begin{prop}\label{prop:LK p,r}
If $0<r\leq \min\{p/2, p-1\}$ then the free cumulant transform $R_{\mu(p,r)}$ has an analytic continuation to $\mathbb{C}^-$
and we have
\begin{equation}\label{eq:LK p,r}
R_{\mu(p,r)}(z)=\int_0^{(p-r)^{p-r}(p-r-1)^{1-(p-r)}}\left(\frac{1}{1-zx}-1\right) W_{p-r,r}(x)dx
\end{equation}
for all $z\in\mathbb{C}^-$, where $W_{p-r,r}(t)$ is the density function of $\mu(p-r,r)$ with respect to Lebesgue measure on $\mathbb{R}$.
\end{prop}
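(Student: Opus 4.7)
The plan is to verify \eqref{eq:LK p,r} first on a small complex neighbourhood of $0$ by a termwise calculation, and then to promote the identity to all of $\C^-$ by combining the free infinite divisibility of $\mu(p,r)$ (which supplies the analytic extension of the left-hand side) with the identity theorem on the connected domain $\C^-$.

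First, by \eqref{freecumulants of mu(p,r)} the free cumulants of $\mu(p,r)$ are $r_n(\mu(p,r))=A_n(p-r,r)$. Under the hypothesis $0<r\leq\min\{p/2,p-1\}$ one has $p-r\geq 1$ and $r\leq p-r$, so $W_{p-r,r}$ is a genuine (nonnegative) probability density supported on $[0,M]$ with $M:=(p-r)^{p-r}(p-r-1)^{1-(p-r)}$; Proposition~\ref{Prop:density} covers the case $p-r>1$, while the boundary case $p-r=1$ is dealt with via the explicit formula for $W_{1,r}$ recalled just after \eqref{eq:propdensitywpr}. The representation \eqref{eg:akprintegral} then gives
\[
A_n(p-r,r)=\int_0^M x^n\,W_{p-r,r}(x)\,dx,\qquad n\geq 1.
\]
Since $\mu(p,r)$ is compactly supported, its free cumulant transform has the power series expansion $R_{\mu(p,r)}(z)=\sum_{n\geq 1}A_n(p-r,r)\,z^n$ on the disk $\{|z|<1/M\}$; substituting the integral and applying Fubini--Tonelli (legitimate because $W_{p-r,r}\geq 0$ and $|zx|\leq |z|M<1$ on the relevant region), the double sum/integral collapses through the geometric series identity $\sum_{n\geq 1}(zx)^n = 1/(1-zx)-1$ into the right-hand side of \eqref{eq:LK p,r}.

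To extend the identity to all of $\C^-$, note that the bound $|1-zx|^2\geq x^2(\Im z)^2$, valid for $x\geq 0$ and $z\in\C^-$, yields $|zx/(1-zx)|\leq |z|/|\Im z|$ locally uniformly in $z$; standard differentiation under the integral sign then shows that the right-hand side of \eqref{eq:LK p,r} is analytic on $\C^-$. For the left-hand side, Theorem~\ref{thm:FID p,r} guarantees that $\mu(p,r)$ is freely infinitely divisible, and therefore Theorem~\ref{BV_char_of_FID} (together with the relation $R_\mu(w)=w\varphi_\mu(1/w)$) provides an analytic extension of $R_{\mu(p,r)}$ to all of $\C^-$. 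Since both sides are analytic on the connected open set $\C^-$ and agree on the nonempty open subset $\{|z|<1/M\}\cap\C^-$, the identity theorem forces \eqref{eq:LK p,r} throughout $\C^-$.

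I do not anticipate any serious obstacle: the argument is essentially bookkeeping once one observes that the free cumulants of $\mu(p,r)$ are literally moments of the Fuss-Catalan distribution $\mu(p-r,r)$. The only minor points of care are the boundary case $p-r=1$ (handled through the $p=1$ density formula from \cite{Mlot}) and checking that the Fubini swap exploits the nonnegativity of $W_{p-r,r}$, which is precisely what the assumption $r\leq p/2$ supplies.
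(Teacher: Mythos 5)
Your proposal is correct and follows essentially the same route as the paper: expand $R_{\mu(p,r)}$ as $\sum_n A_n(p-r,r)z^n$ near $0$, recognize the free cumulants as moments of $W_{p-r,r}$, sum the geometric series under the integral, and then extend to all of $\C^-$ via free infinite divisibility and the identity theorem. Your extra checks (nonnegativity of $W_{p-r,r}$, Fubini, analyticity of the right-hand side on $\C^-$) only make explicit what the paper leaves implicit.
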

\begin{proof}
Suppose that $p\geq 1$ and $0<r< p$. Then the function $W_{p-r,r}(t)$ is the probability density function of the probability measure $\mu(p-r,r)$. Since $W_{p-r,r}(t)$ has a support concentrated on $[0,(p-r)^{p-r}(p-r-1)^{1-(p-r)}]$, we can take $\epsilon>0$ such that $|z|<\epsilon$ and $|zt|<1$ for all $t$ in the support of $W_{p-r,r}(t)$. If it is necessary, then we replace $\epsilon>0$ such that the free cumulant transform $R_{\mu(p,r)}(z)$ extends the power series at $z=0$. Then for $|z|<\epsilon$, the free cumulant transform $R_{\mu(p,r)}$ is written by
\begin{align*}\label{eq:LK p,r}
R_{\mu(p,r)}(z)&=\sum_{n=1}^\infty A_{n}(p-r,r)z^n=\sum_{n=1}^\infty \left(\int_0^{(p-r)^{p-r}(p-r-1)^{1-(p-r)}} x^n W_{p-r,r}(x)dx \right)z^n\\
&=\int_0^{(p-r)^{p-r}(p-r-1)^{1-(p-r)}} \sum_{n=1}^\infty (zx)^n W_{p-r,r}(x)dx\\
&=\int_0^{(p-r)^{p-r}(p-r-1)^{1-(p-r)}} \frac{zx}{1-zx} W_{p-r,r}(x)dx\\
&=\int_0^{(p-r)^{p-r}(p-r-1)^{1-(p-r)}} \left(\frac{1}{1-zx}-1\right) W_{p-r,r}(x)dx.
\end{align*}
In particular, if $0<r\leq \min\{p/2, p-1\}$, then $\mu(p,r)$ is freely infinitely divisible by Theorem \ref{thm:FID p,r}. Hence the free cumulant transform $R_{\mu(p,r)}$ has an analytic continuation to $\mathbb{C}^-$ and therefore the formula \eqref{eq:LK p,r} holds for all $z\in \mathbb{C}^-$ by using the identity theorem of complex analytic functions.
\end{proof}

From the above proposition, the free L\'{e}vy measure $\nu_{\mu(p,r)}$ of $\mu(p,r)$ is given by
\begin{equation}\label{Levy pr}
\nu_{\mu(p,r)}(dx)=\frac{k_{p,r}(x)}{x}dx, \qquad k_{p,r}(x):=x W_{p-r,r}(x),
\end{equation}
for all $p\geq 1$ and $0<r\leq \min\{p/2,p-1\}$.

\subsection{Free self-decomposability for $\mu(p,r)$}
In order to check free self-decomposability of $\mu(p,r)$ we should check
whether or not $xW_{p-r,r}(x)dx$ is unimodal with mode $0$.

\begin{thm}
Suppose that $p\geq 1$, $0<r\leq p$. The Fuss-Catalan distribution $\mu(p,r)$ is freely self-decomposable if and only if $1\leq p=r \leq 2$.
\end{thm}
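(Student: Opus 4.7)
The implication $(\Leftarrow)$ is already in hand: if $1 \le p = r \le 2$ then $\mu(p,r) = \mu(p,p)$ is freely self-decomposable by Theorem~\ref{thm:selfdecpp}. So all the work lies in the reverse implication, which I would address by combining Theorem~\ref{thm:FID p,r} with an analysis of the free L\'evy measure supplied by Proposition~\ref{prop:LK p,r}.

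Suppose $\mu(p,r)$ is freely self-decomposable. In particular it is freely infinitely divisible, so Theorem~\ref{thm:FID p,r} leaves exactly two alternatives: either $1 \le p = r \le 2$ (the desired conclusion) or $0 < r \le \min\{p/2,\,p-1\}$ (so that $r < p$). My task is to rule out this second regime. In that regime, Proposition~\ref{prop:LK p,r} and formula~\eqref{Levy pr} exhibit the free L\'evy measure of $\mu(p,r)$ as
\[
\nu_{\mu(p,r)}(dx) = \frac{k_{p,r}(x)}{x}\,dx, \qquad k_{p,r}(x) = x\,W_{p-r,r}(x),
\]
supported on $[0, x^*]$ with $x^* = (p-r)^{p-r}(p-r-1)^{1-(p-r)}$. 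The L\'evy measure characterization of free self-decomposability (Section~2.2.1) requires $\nu_{\mu(p,r)}$ to be absolutely continuous and the weight $k_{p,r}(x)\,dx$ to be unimodal with mode $0$; since $\nu_{\mu(p,r)}$ sits in $[0,\infty)$, this amounts to $k_{p,r}$ being non-increasing on $(0,\infty)$.

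The heart of the plan is the observation that $k_{p,r}$ cannot be simultaneously non-trivial and non-increasing on $(0, x^*)$. Indeed, if $k_{p,r}(x) \ge c > 0$ on some right-neighbourhood $(0, \varepsilon)$ of $0$, then $W_{p-r,r}(x) \ge c/x$ there, contradicting the local integrability of the probability density $W_{p-r,r}$; hence $\liminf_{x \to 0^+} k_{p,r}(x) = 0$. But a non-increasing function on $(0, x^*)$ has a limit at $0^+$ that equals its supremum, so assuming $k_{p,r}$ is non-increasing forces $k_{p,r} \equiv 0$, contradicting the fact that $W_{p-r,r}$ is a bona fide probability density on $[0, x^*]$. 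This argument covers every $(p,r)$ in case (b) for which $W_{p-r,r}$ is an actual density.

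The only case to dispatch separately is the Marchenko--Pastur point $(p,r) = (2,1)$, at which $\mu(p-r,r) = \mu(1,1) = \delta_1$ and the expression~\eqref{Levy pr} degenerates; here a direct reading of $R_{\mu(2,1)}(z) = z/(1-z)$ identifies the free L\'evy measure as $\delta_1$, which is not absolutely continuous with respect to Lebesgue measure and is therefore inconsistent with free self-decomposability. I expect this Marchenko--Pastur boundary to be the only real subtlety of the argument: on the generic portion of case (b) the integrability argument for $k_{p,r}(0^+)$ is elementary and side-steps any need for fine asymptotic analysis of $W_{p-r,r}$ at the endpoints of its support.
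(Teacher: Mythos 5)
Your proposal is correct, and its skeleton is the same as the paper's: the ``if'' direction via Theorem~\ref{thm:selfdecpp}, the reduction of the ``only if'' direction to the regime $0<r\leq\min\{p/2,p-1\}$ via Theorem~\ref{thm:FID p,r}, and then the free L\'evy-measure criterion of Section~2.2.1 applied to $\nu_{\mu(p,r)}(dx)=\frac{k_{p,r}(x)}{x}dx$ with $k_{p,r}(x)=xW_{p-r,r}(x)$ from Proposition~\ref{prop:LK p,r} and \eqref{Levy pr}. Where you diverge is the decisive step showing $k_{p,r}\,dx$ is not unimodal with mode $0$: the paper cites the Forrester--Liu small-$x$ asymptotics $k_{p,r}(x)\sim\frac{1}{\pi}\sin\bigl(\frac{r\pi}{p-r}\bigr)x^{r/(p-r)}$ to see that $k_{p,r}$ increases near $0$, whereas you replace this by a soft argument: a non-increasing $k_{p,r}$ on $(0,\infty)$ would have its supremum as its limit at $0^+$, while local integrability of the probability density $W_{p-r,r}$ forces that limit to be $0$, hence $k_{p,r}\equiv0$, a contradiction. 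Your route is more elementary (no external asymptotic estimate is needed, only that $W_{p-r,r}$ is a genuine density), and it also makes you confront the one degenerate point $(p,r)=(2,1)$, where $\mu(p-r,r)=\delta_1$ so $W_{1,1}$ is not a density and the paper's asymptotic formula degenerates (the prefactor $\sin(r\pi/(p-r))$ vanishes); you correctly dispose of it by noting the free L\'evy measure of the Marchenko--Pastur law is $\delta_1$, which is not Lebesgue-absolutely continuous and hence incompatible with free self-decomposability. What the paper's approach buys instead is quantitative information (the exact power-law behaviour of $k_{p,r}$ at $0$), which is not needed for the theorem itself. The only cosmetic caveat in your write-up is that ``unimodal with mode $0$'' should be read as $k_{p,r}$ agreeing almost everywhere with a function that is non-increasing on $(0,\infty)$; your integrability argument goes through verbatim under that reading, since such a function positive at some point is bounded below by a positive constant near $0$.
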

\begin{proof}
In view of Theorem~\ref{thm:selfdecpp} and Theorem~\ref{thm:FID p,r}
it suffices to check the case $0<r\leq \min\{p/2,p-1\}$. By \cite[Corollary 2.5]{FL2015},  we have
\begin{align*}
k_{p,r}(x)\sim\frac{1}{\pi}\sin \left(\frac{r\pi}{p-r}\right)x^{\frac{r}{p-r}},
\end{align*}
as $x\rightarrow0^+$, where $k_{p,r}$ was defined in \eqref{Levy pr}. Since $0<r\leq \min\{p/2, p-1\}$, we have $p-r \geq r$. Hence $k_{p,r}'(x)\geq 0$ for  $x\in(0,\epsilon)$, where $\epsilon>0$ is sufficiently small. This implies that $k_{p,r}(x)dx$ can not be unimodal with mode $0$.
From remarks in Section 2.2.1, we conclude that $\mu(p,r)$ is not freely self-decomposable for $0<r\leq \min\{p/2,p-1\}$.
\end{proof}

\subsection{Free regularity for $\mu(p,r)$}
In this section, we investigate free regularlity for $\mu(p,r)$. We should check the free L\'{e}vy measure of $\mu(p,r)$.

\begin{thm}\label{thm:FR p,r}
Suppose that $p\geq1$, $0<r\leq p$. The Fuss-Catalan distribution $\mu(p,r)$ is free regular if and only if either $0<r\leq \min\{p/2, p-1\}$ or $p=r\in \{1,2\}$.
\end{thm}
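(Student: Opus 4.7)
The plan splits into three pieces: reducing to the freely infinitely divisible cases, handling the off-diagonal range $0<r\le\min\{p/2,p-1\}$ via the representation in Proposition~\ref{prop:LK p,r}, and disposing of the diagonal $p=r$ by combining Theorem~\ref{thm:FR p,p} with the two endpoint cases $p\in\{1,2\}$. Since free regularity presupposes free infinite divisibility, Theorem~\ref{thm:FID p,r} immediately tells me that the only candidates are those $(p,r)$ with $0<r\le\min\{p/2,p-1\}$ or $1\le p=r\le 2$, so everything outside these ranges is disposed of at once.

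For the off-diagonal range $0<r\le\min\{p/2,p-1\}$, Proposition~\ref{prop:LK p,r} supplies the representation
\[
R_{\mu(p,r)}(z)=\int_0^{M}\left(\frac{1}{1-zx}-1\right) W_{p-r,r}(x)\,dx,\qquad z\in\mathbb{C}^-,
\]
with $M=(p-r)^{p-r}(p-r-1)^{1-(p-r)}$ and $W_{p-r,r}$ the probability density of $\mu(p-r,r)$. Since $W_{p-r,r}(x)\ge 0$ on its support $[0,M]\subset[0,\infty)$, this is exactly the form in Theorem~\ref{thm:AHS13} with drift $\eta'=0$ and free L\'evy measure $\nu(dx)=W_{p-r,r}(x)\,dx$ concentrated on $(0,\infty)$, so Theorem~\ref{thm:AHS13} yields free regularity.

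For the diagonal $p=r$ I would combine three observations: Theorem~\ref{thm:FR p,p} already rules out $1<p<2$; the endpoint $p=1$ is trivial since $\mu(1,1)=\delta_1$ has $R$-transform $z$, fitting Theorem~\ref{thm:AHS13} with $\eta'=1$ and $\nu\equiv 0$; and the endpoint $p=2$ is handled by the semicircle interpretation of $\mu(2,2)^{\boxplus t}$ recorded after Theorem~\ref{thm:FR p,p}. The only delicate point is the necessity half on the diagonal $1<p<2$, which hinges on reading off from Remark~\ref{rem:LK p,p} that the free L\'evy measure of $\mu(p,p)$ is supported in $(-1,0)$ and therefore violates the support condition of Theorem~\ref{thm:AHS13}; but that step has already been carried out in Theorem~\ref{thm:FR p,p}. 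With these ingredients in place the present theorem reduces to an assembly of previously established facts rather than any new computation, so no genuine obstacle remains beyond correctly matching each parameter region to the appropriate prior result.
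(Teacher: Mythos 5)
Your proposal is correct and follows essentially the same route as the paper: reduce to the freely infinitely divisible candidates from Theorem~\ref{thm:FID p,r}, settle the diagonal via Theorem~\ref{thm:FR p,p} together with the $\delta_1$ and semicircle observations for $p=r\in\{1,2\}$, and obtain free regularity in the range $0<r\le\min\{p/2,p-1\}$ by matching the representation of Proposition~\ref{prop:LK p,r}, whose free L\'evy measure $W_{p-r,r}(x)\,dx$ is concentrated on the positive half-line, with the criterion of Theorem~\ref{thm:AHS13}. No substantive difference from the paper's argument.
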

\begin{proof}
We may assume that $p\geq1$, $0<r\leq p$ and either $0<r\leq \min\{p/2, p-1\}$ or $p=r$ and $1\leq p \leq 2$ holds since $\mu(p,r)$ has to be freely infinitely divisible. We have already proved that $\mu(p,r)$ is not free regular when $p=r$ and $1< p < 2$ and is free regular when $p=r\in\{1,2\}$. Suppose that $0<r\leq \min\{p/2, p-1\}$ holds. Note that $\nu_{\mu(p,r)}(dx)=W_{p-r,r}(x)dx$. Since the support of $W_{p-r,r}$ is concentrated on $[0, (p-r)^{p-r}(p-r-1)^{1-(p-r)}]$ and $\nu_{\mu(p,r)}(dx)$ is Lebesgue absolute continuous, we have $\nu_{\mu(p,r)}((-\infty,0])=0$. By Proposition \ref{prop:LK p,r} and Theorem \ref{thm:AHS13}, the distribution $\mu(p,r)$ is free regular. Therefore $\mu(p,r)$ is free regular if and only if either $0<r\leq \min\{p/2, p-1\}$ or $p=r\in \{1,2\}$.
\end{proof}

\subsection{Unimodality for $\mu(p,r)$: 4 cases}

The Fuss-Catalan distributions $\mu(p,r)$ are absolutely continuous for $p>1$, $0<r\leq p$, therefore
we have to verify unimodality of the density function $W_{p,r}(x)$.
Equivalently, it is sufficient to check whether the right hand side of (\ref{eq:propdensitywpr})
is an unimodal function for $0<\phi<\pi/p$. This turns out to be quite difficult in full generality.
We know already from Proposition~\ref{prop:unimodal p,p} that all $\mu(p,p)$, $p\geq1$, are unimodal.
Here we will study some further special cases.

\subsubsection{Case I: $\mu(p,p-1)$}

The Fuss-Catalan distributions $\mu(p,p-1)$, $p>1$, are not freely self-decomposable, but we will show that they are unimodal.

\begin{prop}\label{prop:unimodal (p,p-1)}
The Fuss-Catalan distribution $\mu(p,p-1)$ is unimodal for all $p\geq 1$.
\end{prop}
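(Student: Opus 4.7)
The plan is to reduce the unimodality claim to a one-line monotonicity check, by exploiting the fact that the exponent $p-r-1$ in Proposition~\ref{Prop:density} vanishes precisely when $r = p-1$. The degenerate case $p = 1$ gives $\mu(1,0) = \delta_0$, which is trivially unimodal, so throughout I may assume $p > 1$.

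Setting $r = p-1$ in \eqref{eq:propdensitywpr} makes the awkward factor $(\sin((p-1)\varphi))^{p-r-1}$ collapse to $1$, and the density reduces to
\begin{equation*}
W_{p,p-1}(\rho(\varphi)) = \frac{\sin(\varphi)\sin((p-1)\varphi)}{\pi\sin(p\varphi)}, \qquad 0 < \varphi < \frac{\pi}{p}.
\end{equation*}
The key algebraic step is then to apply $\sin(p\varphi) = \sin(\varphi)\cos((p-1)\varphi) + \cos(\varphi)\sin((p-1)\varphi)$ and divide through by $\sin(\varphi)\sin((p-1)\varphi)$, obtaining
\begin{equation*}
\pi W_{p,p-1}(\rho(\varphi)) = \frac{1}{h(\varphi)}, \qquad h(\varphi) := \cot(\varphi) + \cot((p-1)\varphi).
\end{equation*}

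The remaining task is to show $h$ is strictly decreasing on $(0,\pi/p)$. Since $(p-1)\varphi \in \bigl(0,(p-1)\pi/p\bigr)\subset(0,\pi)$, both cotangents are smooth on this interval, and
\begin{equation*}
h'(\varphi) = -\csc^{2}(\varphi) - (p-1)\csc^{2}\bigl((p-1)\varphi\bigr) < 0.
\end{equation*}
Thus $W_{p,p-1}(\rho(\varphi))$ is strictly increasing in $\varphi$. Composing with the decreasing bijection $\rho\colon(0,\pi/p)\to\bigl(0,p^{p}(p-1)^{1-p}\bigr)$ from Proposition~\ref{Prop:density}, I conclude that $W_{p,p-1}(x)$ is strictly decreasing in $x$ on the support, which is unimodality with mode at $0$.

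There is essentially no obstacle here: the whole argument hinges on noticing that the choice $r = p-1$ kills the most awkward factor in the Forrester--Liu density formula, after which everything reduces to a single trigonometric identity and one positivity check on a derivative. This stands in sharp contrast with Proposition~\ref{prop:unimodal p,p}, where the analogous exponent does not vanish and the critical equation for the density must be analyzed directly.
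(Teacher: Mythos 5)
Your proof is correct and takes essentially the same route as the paper: both reduce the case $p>1$ to the identity $\pi W_{p,p-1}(\rho(\varphi))=\bigl(\cot\varphi+\cot((p-1)\varphi)\bigr)^{-1}$ and conclude unimodality from monotonicity on $(0,\pi/p)$, the only cosmetic difference being that you differentiate $h(\varphi)=\cot\varphi+\cot((p-1)\varphi)$ while the paper differentiates its reciprocal. Your placement of the mode at $0$ is in fact the accurate one (the paper's closing line calls it $\rho(\pi/p)=p^p(p-1)^{1-p}$, but since $\rho$ is decreasing one has $\rho(\pi/p^-)=0$, so the density is decreasing in $x$ with mode $0$).
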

\begin{proof}
If $p=1$, then $\mu(p,p-1)=\delta_0$, and therefore it is unimodal. Assume that $p>1$.
Then the density function is
\[
W_{p,p-1}(\rho(\varphi))=\frac{1}{\pi(\cot((p-1)\varphi)+\cot\varphi)}, \qquad 0<\varphi<\frac{\pi}{p},
\]
with $\rho(\varphi)$ given by (\ref{eq:propdensityrho}).
We consider a function $g_p(\varphi)$ defined by $g_p(\varphi):=\pi W_{p,p-1}(\rho(\varphi))$ for $0<\varphi<\frac{\pi}{p}$. Then
\begin{equation*}
g_p'(\varphi)=\frac{(p-1)\sin^2\varphi+\sin^2((p-1)\varphi)}{(\cot((p-1)\varphi)+\cot\varphi)^2\sin^2((p-1)\varphi)\sin^2\varphi}.
\end{equation*}
If $p>1$, then we have that $g_p'(\varphi)>0$ for all $0<\varphi<\frac{\pi}{p}$. Hence $g_p(\varphi)$ is non-decreasing on $(0, \frac{\pi}{p})$ and therefore $\mu(p,p-1)$ is unimodal with mode $\rho(\pi/p)=p^p(p-1)^{1-p}$.
\end{proof}

\subsubsection{Case II: $\mu(2r,r)$}
We show that $\mu(2r,r)$ is unimodal for all $r\geq 1$ in this section. Note that $\mu(2r,r)$ does not have a singular part with respect to Lebesgue measure for $r\geq 1$. Therefore we consider only probability density of $\mu(2r,r)$ to study unimodality. The probability density function of $\mu(2r,r)$ is given by
\begin{align*}
W_{2r,r}(\rho(\varphi))=\frac{\sin^{r-1}((2r-1)\varphi)\sin\varphi \sin(r\varphi)}{\pi \sin^r(2r\varphi)}.
\end{align*}

\begin{prop}\label{prop;2r=p, UM}
The Fuss-Catalan distribution $\mu(2r,r)$ is unimodal for all $r\geq 1$.
\end{prop}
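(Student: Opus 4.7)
My approach is to exploit the change of variables from Proposition~\ref{Prop:density}: set $g(\varphi):=W_{2r,r}(\rho(\varphi))$ for $\varphi\in(0,\pi/(2r))$. Since $\rho$ is a strictly decreasing bijection onto the support of $\mu(2r,r)$, unimodality of $W_{2r,r}$ in $x$ is equivalent to unimodality of $g$ in $\varphi$. Direct inspection of the density formula shows $g(\varphi)\to 0$ as $\varphi\to 0^{+}$ (the density vanishes at the right endpoint of the support) and $g(\varphi)\to +\infty$ as $\varphi\to(\pi/(2r))^{-}$ (the integrable Marchenko-Pastur-type singularity at $x=0$, produced by the $r$-th order zero of $\sin(2r\varphi)$ in the denominator). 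For a continuous positive function with these two boundary limits, unimodality is equivalent to strict monotonicity. Thus the plan reduces to proving $g$ is strictly \emph{increasing} on $(0,\pi/(2r))$---equivalently, $W_{2r,r}$ is strictly decreasing on its support, i.e.\ unimodal with mode at $0$.

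Computing the logarithmic derivative and applying $\cot(r\varphi)-\cot(2r\varphi)=1/\sin(2r\varphi)$ (itself a consequence of $\cot A-\cot B=\sin(B-A)/(\sin A\sin B)$) to eliminate $\cot(2r\varphi)$, I obtain
$$
(\log g)'(\varphi)=\cot\varphi+\frac{2r^{2}}{\sin(2r\varphi)}+(2r-1)\bigl[(r-1)\cot((2r-1)\varphi)-r\cot(r\varphi)\bigr].
$$
The first two terms are strictly positive on $(0,\pi/(2r))$. Applying the same cotangent-difference identity together with the addition formula $\sin((3r-1)\varphi)=\sin((r-1)\varphi)\cos(2r\varphi)+\cos((r-1)\varphi)\sin(2r\varphi)$ rewrites the bracket as
$$
(r-1)\cot((2r-1)\varphi)-r\cot(r\varphi)=-\frac{\sin((r-1)\varphi)\bigl[(2r-1)+\cos(2r\varphi)\bigr]+\cos((r-1)\varphi)\sin(2r\varphi)}{2\sin(r\varphi)\sin((2r-1)\varphi)},
$$
which is manifestly non-positive on the interval: $(r-1)\varphi\in[0,\pi/2)$ yields $\sin((r-1)\varphi),\cos((r-1)\varphi)\geq 0$, one has $\sin(2r\varphi)>0$ since $2r\varphi\in(0,\pi)$, and $(2r-1)+\cos(2r\varphi)\geq 2r-2\geq 0$.

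The heart of the proof is then the quantitative estimate that the positive combination $\cot\varphi+2r^{2}/\sin(2r\varphi)$ dominates $(2r-1)$ times the magnitude of the negative bracketed term. Clearing denominators by multiplying by $2\sin\varphi\sin(r\varphi)\sin((2r-1)\varphi)\sin(2r\varphi)>0$ and using product-to-sum identities together with $\sin(2r\varphi)=2\sin(r\varphi)\cos(r\varphi)$ reduces the required inequality to positivity of an explicit trigonometric polynomial in $\varphi$. For $r=1$ the bracket vanishes identically and positivity of $(\log g)'$ is immediate. For $r>1$, the main obstacle is verifying this positivity uniformly on $(0,\pi/(2r))$: the endpoints are controlled by the asymptotic dominance of $\cot\varphi\sim 1/\varphi$ near $0$ and of $2r^{2}/\sin(2r\varphi)\to\infty$ near $\pi/(2r)$, but the middle range requires a more careful argument. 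A robust route is the substitution $t=\tan\varphi$, which for integer $r$ turns the inequality into positivity of a rational function of $t$ on $(0,\tan(\pi/(2r)))$ (for instance, $r=2$ gives $3+15t^{2}-3t^{4}+t^{6}$, positive on $(0,1)$ by term-by-term estimation); for real $r\geq 1$, I expect the analogous estimate to follow by bounding the negative bracketed numerator above by $2r\sin((r-1)\varphi)+\cos((r-1)\varphi)\sin(2r\varphi)$ (using $\cos(2r\varphi)\leq 1$) and comparing with the positive side after another product-to-sum reduction.
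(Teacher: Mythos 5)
Your reduction is correct as far as it goes: the passage to the variable $\varphi$, the boundary limits of $g$, the formula for $(\log g)'$, and the rewriting of $(r-1)\cot((2r-1)\varphi)-r\cot(r\varphi)$ as a manifestly non-positive quotient all check out (one small slip: for $r=1$ that bracket equals $-\cot\varphi$, not $0$; positivity of $(\log g)'=2/\sin(2\varphi)$ is still immediate there). The genuine gap is exactly the step you yourself call the heart of the proof: the inequality
\begin{equation*}
\cot\varphi+\frac{2r^{2}}{\sin(2r\varphi)}\;\geq\;(2r-1)\,
\frac{\sin((r-1)\varphi)\bigl[(2r-1)+\cos(2r\varphi)\bigr]+\cos((r-1)\varphi)\sin(2r\varphi)}{2\sin(r\varphi)\sin((2r-1)\varphi)},
\qquad 0<\varphi<\tfrac{\pi}{2r},
\end{equation*}
is never proved for general real $r\geq1$. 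You verify it only for $r=2$ via an explicit polynomial in $t=\tan\varphi$ --- and the substitution $t=\tan\varphi$ produces a polynomial (or rational function) only for integer $r$, whereas the proposition concerns all real $r\geq1$ --- and for the remaining values you write that you ``expect'' the estimate to follow from $\cos(2r\varphi)\leq1$ and term-by-term bounds. That expectation is not a routine check: as $\varphi\to0^{+}$ both sides behave like $r/\varphi+O(1)$, so the leading singular terms cancel exactly and positivity survives only through the subleading $\cot\varphi$ contribution (the difference is $\sim 1/\varphi$ against two sides of size $\sim r/\varphi$); in particular the weakened bound you propose leaves essentially no slack near the left endpoint, and the ``middle range'' you flag as the main obstacle is precisely where the content of the proposition lies. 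As written, the claim for all $r\geq1$ is not established.

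For comparison, the paper does not pass to the logarithmic derivative at all: it differentiates $g_r$ directly, writes $g_r'=G_r(\varphi)/\sin^{r+1}(2r\varphi)$, discards two manifestly nonnegative pieces of $G_r$ (using $r\varphi,(2r-1)\varphi\in(0,\pi)$ and $2r^{2}-3r+1\geq0$), and reduces what remains to the elementary identity $\sin(2r\varphi)\cos(r\varphi)-2\cos(2r\varphi)\sin(r\varphi)=2\sin^{3}(r\varphi)\geq0$, valid for every real $r>1$ and every $\varphi\in(0,\pi/(2r))$; no delicate balancing of singular terms is needed. If you wish to complete your route, you must supply an argument for the displayed inequality that is uniform in $r\geq1$ and in $\varphi$ over the whole interval, not just at the endpoints and for sample values of $r$.
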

\begin{proof}
If $r=1$, then $\mu(2r,r)=\Pi_{1,1}$ is unimodal. Assume that $r>1$. Let $g_r(\varphi):=\pi W_{2r,r}(\rho(\varphi))$. We have
\begin{align*}
g_r'(\varphi)=\frac{G_r(\varphi)}{\sin^{r+1}(2r\varphi)},
\end{align*}
where
\begin{align*}
G_r(\varphi):&=\sin^{r-2}((2r-1)\varphi)\sin(2r\varphi) \Big[ (2r^2-3r+1)\sin\varphi \sin(r\varphi)\cos((2r-1)\varphi)\\
&+\sin((2r-1)\varphi)(r\sin\varphi\cos(r\varphi)+\cos\varphi\sin(r\varphi)) \Big]-2r\cos(2r\varphi)\sin^{r-1}((2r-1)\varphi)\sin\varphi\sin(r\varphi)
\end{align*}
Since $2r^2-3r+1>0$ and $r\varphi, (2r-1)\varphi \in (0,\pi/2)$ for all $\varphi \in (0,\frac{\pi}{2r})\subset (0,\pi/2)$ and $r>1$, we have
\begin{align*}
(2r^2-3r+1)\sin^{r-2}((2r-1)\varphi)\sin(2r\varphi) \sin\varphi \sin(r\varphi)\cos((2r-1)\varphi)\geq 0.
\end{align*}
Therefore we have
\begin{align*}
G_r(\varphi)
&\geq \sin^{r-1}((2r-1)\varphi)\sin(2r\varphi)(r\sin\varphi\cos(r\varphi)+\cos\varphi\sin(r\varphi)) -2r\cos(2r\varphi)\sin^{r-1}((2r-1)\varphi)\sin\varphi\sin(r\varphi)\\
&\geq r\sin^{r-1}((2r-1)\varphi)\sin(2r\varphi)\sin\varphi\cos(r\varphi)-2r\cos(2r\varphi)\sin^{r-1}((2r-1)\varphi)\sin\varphi\sin(r\varphi)\\
&=r\sin\varphi\sin^{r-1}((2r-1)\varphi) \Big[ \sin(2r\varphi)\cos(r\varphi)-2\cos(2r\varphi)\sin(r\varphi)\Big]\\
&=2r\sin\varphi\sin^{r-1}((2r-1)\varphi)\sin^3(r\varphi)\\
&\geq 0,
\end{align*}
for all $\varphi\in (0,\frac{\pi}{2r})$. Hence $g_r(\varphi)$ is non-decreasing on $(0,\frac{\pi}{2r})$, and therefore $\mu(2r,r)$ is unimodal for all $r>1$.
\end{proof}

\subsubsection{Case III: $\mu(1,r)$}

The density function of $\mu(1,r)$ can not be written by \eqref{eq:propdensitywpr}, but we have obtained a density formula of $\mu(1,r)$ in \cite{Mlot}.

\begin{prop}\label{prop:W_1,r}
Suppose that $0<r<1$. We have
\begin{equation}\label{eq:w1rformula}
W_{1,r}(x)=\frac{\sin(r\pi)}{\pi} x^{r-1}(1-x)^{-r}\mathbf{1}_{(0,1)}(x).
\end{equation}
Furthermore $\mu(1,r)$ is not unimodal for $0< r<1$.
\end{prop}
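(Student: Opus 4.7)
The plan is to handle the two claims separately. For the density formula, I would verify that the function on the right-hand side of \eqref{eq:w1rformula} has the correct moment sequence $A_k(1,r) = (r)_k/k!$; since the measure is compactly supported this determines it uniquely. For $k\ge 0$, writing out
\[
\int_0^1 x^k \cdot \frac{\sin(r\pi)}{\pi}\, x^{r-1}(1-x)^{-r}\,dx = \frac{\sin(r\pi)}{\pi}\, B(k+r,\,1-r) = \frac{\sin(r\pi)}{\pi}\cdot\frac{\Gamma(k+r)\,\Gamma(1-r)}{\Gamma(k+1)},
\]
and then applying Euler's reflection formula $\Gamma(r)\Gamma(1-r) = \pi/\sin(r\pi)$, the right side collapses to $\Gamma(k+r)/(k!\,\Gamma(r)) = (r)_k/k! = A_k(1,r)$, as desired. (One needs $0<r<1$ exactly so that $B(k+r,1-r)$ converges and Euler's formula applies.)

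For non-unimodality, I would show that the proposed density has a \emph{valley} (a unique interior minimum and blow-ups at both endpoints of $(0,1)$), which is incompatible with the definition of unimodality recalled in Section~2.2.1. Letting $f(x) := x^{r-1}(1-x)^{-r}$ on $(0,1)$, a direct differentiation gives
\[
f'(x) = x^{r-2}(1-x)^{-r-1}\bigl(x-(1-r)\bigr),
\]
so $f$ is strictly decreasing on $(0,1-r)$ and strictly increasing on $(1-r,1)$, with $\lim_{x\to 0^+}f(x) = \lim_{x\to 1^-}f(x) = +\infty$ since $0<r<1$.

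To conclude that $\mu(1,r)$ is not unimodal, I would argue by contradiction: suppose $\mu(1,r) = c\delta_a + g(x)\,dx$ with $g$ non-decreasing on $(-\infty,a)$ and non-increasing on $(a,\infty)$. Outside $\{a\}$, the absolutely continuous part $g$ must agree a.e. with $W_{1,r}$, hence equals the above $f$ (up to the constant $\sin(r\pi)/\pi$) on $(0,1)\setminus\{a\}$ and $0$ on $\R\setminus[0,1]$. The decrease of $f$ on $(0,1-r)$ forces $a\le 0$, whereas the increase on $(1-r,1)$ together with $g\equiv 0$ on $(1,\infty)$ forces $a\ge 1$, a contradiction.

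The whole argument is essentially routine calculus once the density formula is in hand; the only mildly delicate point is the bookkeeping in the contradiction step, making sure that allowing an atom at $a$ and redefining $g$ on a null set cannot rescue unimodality. I expect no real obstacle beyond writing this dichotomy carefully.
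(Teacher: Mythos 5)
Your proposal is correct, and it splits naturally into two halves that compare differently with the paper. For the non-unimodality claim you do essentially what the paper does: the paper simply remarks that it is elementary to check that $W_{1,r}$ is decreasing on $(0,1-r)$ and increasing on $(1-r,1)$, hence not unimodal; your explicit derivative $f'(x)=x^{r-2}(1-x)^{-r-1}\bigl(x-(1-r)\bigr)$ and the careful contradiction against the definition of unimodality (forcing simultaneously $a\le 0$ and $a\ge 1$, with the atom and a.e.\ modifications ruled out) is just a fully written-out version of that remark, and it is sound. For the density formula the routes genuinely differ: the paper does not prove \eqref{eq:w1rformula} at all but cites formula (5.2) of \cite{Mlot}, whereas you rederive it by checking that the candidate density reproduces the moments $A_k(1,r)=(r)_k/k!$ via the Beta integral $\frac{\sin(r\pi)}{\pi}B(k+r,1-r)=(r)_k/k!$ (Euler reflection), and then invoke moment determinacy of compactly supported measures. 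Your computation is correct (in particular $k=0$ gives total mass $1$, and $\sin(r\pi)>0$ gives positivity), and it buys a self-contained proof at the cost of the small extra point you gloss over, namely that $\mu(1,r)$ itself is moment-determinate; this follows either from the paper's statement that its support is $[0,1]$, or directly from the bound $(r)_k/k!\le 1$, which forces the support into $[-1,1]$. With that sentence added, your argument is complete and independent of \cite{Mlot}.
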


\begin{proof}
Equation (\ref{eq:w1rformula}) was proved in \cite[formula (5.2)]{Mlot}. Now it is elementary to check that for $0<r<1$ the function $W_{1,r}(z)$
is decreasing on $x\in(0,1-r)$ and increasing for $x\in(1-r,1)$, hence is not unimodal.
\end{proof}

Since $W_{1,p-1}(x)dx$ is the free L\'{e}vy measure of $\mu(p,p-1)$, ($1<p<2$) and it is written by \eqref{eq:w1rformula}, we get an explicit free L\'{e}vy-Khintchine representation of $\mu(p,p-1)$. From the form of free cumulants of $\mu(p,p-1)$, its free cumulant transform $R_{\mu(p,p-1)}$ is written by
\begin{equation*}
R_{\mu(p,p-1)}(z)=\sum_{n=1}^\infty A_n(1,p-1)z^n=\frac{1}{(1-z)^{p-1}}-1.
\end{equation*}
Similarly as in Proposition \ref{prop:LK p,p}, we get the following formula.

\begin{cor}\label{lem:LK p,p-1}
For $1<p<2$, we have
\begin{equation}\label{eq:LK p,p-1}
\begin{split}
R_{\mu(p,p-1)}(z)&=(p-1)z+\int_{\mathbb{R}} \left( \frac{1}{1-zx}-1-zx \mathbf{1}_{[-1,1]} (x)\right) \times \left(-\frac{\sin(p\pi)}{\pi}\right) x^{p-2}(1-x)^{1-p} \mathbf{1}_{(0,1)}(x)dx\\
&=\int_{\mathbb{R}} \left( \frac{1}{1-zx}-1\right) \times \left(-\frac{\sin(p\pi)}{\pi}\right) x^{p-2}(1-x)^{1-p} \mathbf{1}_{(0,1)}(x)dx,
\end{split}
\end{equation}
for all $z\in\mathbb{C}^-$.
\end{cor}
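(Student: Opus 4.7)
The plan is to specialize Proposition~\ref{prop:LK p,r} to the choice $r = p-1$. For $1 < p < 2$ one checks $p - 1 \leq p/2$ (equivalent to $p \leq 2$) and trivially $p-1 \leq p-1$, so the hypothesis $0 < r \leq \min\{p/2, p-1\}$ is satisfied. With this choice $p - r = 1$, and the support endpoint $(p-r)^{p-r}(p-r-1)^{1-(p-r)}$ degenerates to $1$, matching the support $(0,1)$ of the free L\'evy measure $W_{1,p-1}(x)\,dx$ of $\mu(p,p-1)$.

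Next I would substitute the explicit density formula of Proposition~\ref{prop:W_1,r}:
$$W_{1,p-1}(x) = \frac{\sin((p-1)\pi)}{\pi}\, x^{p-2}(1-x)^{1-p}\, \mathbf{1}_{(0,1)}(x).$$
Using the reflection identity $\sin((p-1)\pi) = -\sin(p\pi)$ converts the coefficient to $-\sin(p\pi)/\pi$, which is exactly the kernel appearing in \eqref{eq:LK p,p-1}. Combined with Proposition~\ref{prop:LK p,r}, this immediately produces the \emph{second} equality of \eqref{eq:LK p,p-1}, valid on all of $\mathbb{C}^-$ because the analytic continuation is already provided by that proposition.

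To recover the \emph{first} equality, I would rewrite the integrand in the standard free L\'evy-Khintchine form. Since the measure is supported in $(0,1) \subseteq [-1,1]$, inserting $\mathbf{1}_{[-1,1]}(x)$ next to any $zx$ term under the integral is free of cost. Subtracting and then compensating a $zx$-term generates a drift constant equal to
$$\int_0^1 x\, W_{1,p-1}(x)\,dx = A_1(1,p-1) = \binom{p-1}{1} = p-1,$$
so the compensated integral plus $(p-1)z$ reproduces the first line of \eqref{eq:LK p,p-1}.

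There is no serious obstacle here: the corollary is essentially a direct specialization of Proposition~\ref{prop:LK p,r} combined with the closed-form density of Proposition~\ref{prop:W_1,r}. The only minor points demanding attention are the sign flip $\sin((p-1)\pi) = -\sin(p\pi)$ and the bookkeeping of the linear compensator; analyticity on $\mathbb{C}^-$ is inherited from Proposition~\ref{prop:LK p,r} and the identity theorem.
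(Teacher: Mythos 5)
Your proposal is correct, but it takes a genuinely different route from the paper. The paper proves the corollary by direct computation, mirroring the proof of Proposition~\ref{prop:LK p,p}: it evaluates the compensated integral on the first line of \eqref{eq:LK p,p-1} as $z^2\bigl(-\tfrac{\sin(p\pi)}{\pi}\bigr)\int_0^1 \tfrac{x^p(1-x)^{1-p}}{1-zx}\,dx$, recognizes this as $B(p+1,2-p)\,{}_2F_1(1,p+1,3;z)$ via \eqref{eq:integral rep}, applies the closed form of Lemma~\ref{lem:Gaussformula} together with $R_{\mu(p,p-1)}(z)=(1-z)^{1-p}-1$ to get $(1-p)z+R_{\mu(p,p-1)}(z)$ near $0$, and then extends to $\C^-$ by the identity theorem. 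You instead bypass the hypergeometric machinery entirely: you specialize Proposition~\ref{prop:LK p,r} to $r=p-1$ (the hypothesis $r\le\min\{p/2,p-1\}$ does hold, since $p-1\le p/2$ exactly when $p\le 2$), insert the explicit density of Proposition~\ref{prop:W_1,r} with the reflection identity $\sin((p-1)\pi)=-\sin(p\pi)$ to obtain the second line of \eqref{eq:LK p,p-1} on all of $\C^-$, and recover the first line by noting that the compensator term contributes $z\int_0^1 x\,W_{1,p-1}(x)\,dx = A_1(1,p-1)z=(p-1)z$, the support $(0,1)\subset[-1,1]$ making the cutoff harmless. Both arguments are sound; yours is shorter and more structural, reusing results already established, while the paper's is self-contained at the level of the Beta/${}_2F_1$ identities and exhibits the closed form of $R_{\mu(p,p-1)}$ explicitly in the computation. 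The only small point to make explicit in your version is that the upper limit $(p-r)^{p-r}(p-r-1)^{1-(p-r)}$ in Proposition~\ref{prop:LK p,r} should be read as $1$ when $p-r=1$, consistent with the support $[0,1]$ of $\mu(1,p-1)$, and that the integrability of $x\,W_{1,p-1}(x)$ (a Beta integral, finite for $1<p<2$) justifies splitting off the compensator.
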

\begin{proof}
For all $z$ in a neighborhood of $0$, by applying the integral representation \eqref{eq:integral rep} and Lemma \ref{lem:Gaussformula} again, we have
\begin{align*}
\int_{\mathbb{R}} &\left( \frac{1}{1-zx}-1-zx \mathbf{1}_{[-1,1]} (x)\right) \times \left(-\frac{\sin(p\pi)}{\pi}\right) x^{p-2}(1-x)^{1-p} \mathbf{1}_{(0,1)}(x)dx\\
&=z^2\left(-\frac{\sin(p\pi)}{\pi}\right) \int_0^1 \frac{x^p(1-x)^{1-p}}{1-zx}dx\\
&=z^2\left(-\frac{\sin(p\pi)}{\pi}\right) B(p+1,2-p) _2F_1(1,p+1,3;z)\\
&=z^2\left(-\frac{\sin(p\pi)}{\pi}\right) \left( -\frac{\pi(p-1)p}{2\sin(p\pi)}\right) \left( -\frac{2(pz-z+1-(1-z)^{1-p})}{(p-1)pz^2}\right)\\
&=-pz+z-1+\frac{1}{(1-z)^{p-1}}\\
&=(1-p)z+R_{\mu(p,p-1)}(z).
\end{align*}
Therefore the free cumulant transform of $\mu(p,p-1)$ has the representation \eqref{eq:LK p,p-1} on the neighborhood of $0$ for $1<p<2$. Since $\mu(p,p-1)$ is freely infinitely divisible for all $1<p < 2$, its free cumulant transform has an analytic continuation to $\mathbb{C}^-$ and therefore the formula of $\eqref{eq:LK p,p-1}$ hold for all $z\in \mathbb{C}^-$ by using the identity theorem of complex analytic functions.
\end{proof}

\subsubsection{Case IV: $\mu(2,r)$}
According to Theorem \ref{thm:FID p,r}, the Fuss-Catalan distribution $\mu(2,r)$ is not freely infinitely divisible, (and therefore is not freely self-decomposable) for all $1<r<2$. However we obtain unimodality for $\mu(2,r)$ for some $r>1$. Note that $\mu(2,r)$ does not have a singular part with respect to Lebesgue measure for $1<r<2$. Therefore we consider only probability density of $\mu(2,r)$ to study unimodality. The probability density function of $\mu(2,r)$ is given by
\begin{equation*}
W_{2,r}(\rho(\varphi))=\frac{\sin(r\varphi)}{\pi\cdot 2^{2-r}(\cos\varphi)^{2-r}}, \qquad 0<\varphi<\frac{\pi}{2}.
\end{equation*}

\begin{prop}\label{prop:unimodal 2,r}
There exists $r_0\in (\frac{3}{2},2)$ such that the Fuss-Catalan distribution $\mu(2,r)$ is unimodal for all $1<r<r_0$, where $r_0$ is a unique solution in $\frac{3}{2}< r< 2$ of the equation
\begin{equation}\label{equal}
r\sin\left( \frac{3r+3}{2r+4} \pi\right)+2\sin\left(\frac{3r}{2r+4}\pi\right) \cos\left( \frac{3}{2r+4} \pi \right)=0.
\end{equation}
\end{prop}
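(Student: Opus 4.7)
The plan is to translate unimodality of $\mu(2,r)$ into positivity of a trigonometric function $h$ on $(0,\pi/2)$, find the smallest $r>1$ at which $h$ develops a double zero, and show that this critical $r$ coincides with the solution $r_{0}$ of (\ref{equal}). Since $\rho(\varphi)=4\cos^{2}\varphi$ is strictly decreasing on $(0,\pi/2)$ and $W_{2,r}(x)\sim c\,x^{r/2-1}\to\infty$ as $x\to 0^{+}$ for $r<2$ (by \cite[Corollary 2.5]{FL2015}), unimodality of $\mu(2,r)$ is equivalent to $W_{2,r}$ being non-increasing on $(0,4)$, and hence to $g(\varphi):=\sin(r\varphi)(\cos\varphi)^{r-2}$ being non-decreasing on $(0,\pi/2)$. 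A product-to-sum computation yields
\[
g'(\varphi)=(\cos\varphi)^{r-3}h(\varphi),\qquad h(\varphi):=\cos((r-1)\varphi)+(r-1)\cos((r+1)\varphi),
\]
so one must show $h\ge 0$ on $(0,\pi/2)$. Since $h(0)=r>0$ and $h(\pi/2)=(2-r)\cos((r-1)\pi/2)>0$ for $1<r<2$, any zero of $h$ must be interior, and by continuity the critical $r_{*}$ at which $h$ first loses positivity must be one at which $h$ has a double zero $\varphi^{*}\in(0,\pi/2)$.

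I would solve the system $h(\varphi^{*})=h'(\varphi^{*})=0$ by writing $a:=(r-1)\varphi^{*}$, $b:=(r+1)\varphi^{*}$, so that $\cos a=-(r-1)\cos b$ and $\sin a=-(r+1)\sin b$. Squaring and adding gives $\sin^{2}b=(2-r)/4$ and $\cos^{2}b=(r+2)/4$; the sign constraints from $a\in(0,\pi/2)$ force $\cos b,\sin b<0$, hence $b=\pi+\arccos(r/2)/2$ (using $\cos(2\alpha)=r/2$ for $\alpha:=\arccos(r/2)/2$, which gives $\cos\alpha=\sqrt{r+2}/2$, $\sin\alpha=\sqrt{2-r}/2$). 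Independently, a direct expansion of $\cos(b-a)$ and $\sin(b-a)$ from these explicit values yields $\cos(2\varphi^{*})=-r/2$ and $\sin(2\varphi^{*})=\sqrt{4-r^{2}}/2>0$, so $2\varphi^{*}=\pi-\arccos(r/2)$. Equating the two expressions $\varphi^{*}=b/(r+1)$ and $\varphi^{*}=(\pi-\arccos(r/2))/2$ and simplifying produces the clean critical equation
\[
(r+2)\arccos(r/2)=(r-1)\pi,\qquad\text{equivalently}\qquad\cos\tfrac{3\pi}{r+2}=-\tfrac{r}{2}.
\]
Setting $\theta:=\tfrac{3\pi}{2(r+2)}$ (so $(r+2)\theta=3\pi/2$) and using $2\sin(r\theta)\cos\theta=\sin((r+1)\theta)+\sin((r-1)\theta)$ recasts this as precisely (\ref{equal}).

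For the uniqueness of $r_{0}$ in $(3/2,2)$ I would let $f(r):=\cos\tfrac{3\pi}{r+2}+\tfrac{r}{2}$ and check $f(3/2)=\cos(6\pi/7)+3/4<0$, $f(2)=1-\sqrt{2}/2>0$, together with
\[
f'(r)=\tfrac{3\pi}{(r+2)^{2}}\sin\tfrac{3\pi}{r+2}+\tfrac{1}{2}>0
\]
on $(1,2)$, because $\tfrac{3\pi}{r+2}\in(3\pi/4,\pi)$ has positive sine. Hence $f$ is strictly increasing, with a unique root $r_{0}\in(3/2,2)$. Unimodality for $r\in(1,r_{0})$ then follows by continuity: the set $S:=\{r\in(1,2):h(\cdot,r)>0\text{ on }(0,\pi/2)\}$ is open, contains a neighborhood of $r=1$ (since $h(\varphi,1)\equiv 1$), and its supremum $r_{*}$ must satisfy the double-zero condition derived above (an interior boundary zero is automatically a local minimum) and therefore coincide with $r_{0}$; hence $S\supset(1,r_{0})$ and $\mu(2,r)$ is unimodal on this range.

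The main obstacle is the trigonometric simplification showing that the double-zero condition collapses precisely to (\ref{equal}): both the squaring-and-adding step (to pin down $\sin b,\cos b$) and the independent evaluation of $\cos(b-a),\sin(b-a)$ (to pin down $2\varphi^{*}$) are required, followed by elimination of $\varphi^{*}$ and the recognition that the substitution $\theta=3\pi/(2(r+2))$ brings the resulting transcendental condition into the stated closed form.
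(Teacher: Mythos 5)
Your proposal is correct, and it takes a genuinely different route from the paper, even though both arguments boil down to positivity of the same trigonometric function: your $h(\varphi)=\cos((r-1)\varphi)+(r-1)\cos((r+1)\varphi)$ is exactly the paper's $h_r(\varphi)=r\cos((r+1)\varphi)+2\sin(r\varphi)\sin\varphi$ rewritten by product-to-sum. The paper fixes $r\in(1,r_0)$ and proves $h_r>0$ on $(0,\pi/2)$ by a hands-on case analysis (sign of $h_r$ near $0$, sign of $h_r'$ on $(\tfrac{\pi}{2(r+1)},\tfrac{3\pi}{2(r+2)})$ -- this is where $A(r)>0$, i.e.\ $r<r_0$, is used -- then convexity of $h_r$ on the last subinterval and the identity $(\cos\varphi_0)h_r(\varphi_0)=r\cos((r+2)\varphi_0)>0$ at the unique critical point), so \eqref{equal} enters only as a technical bound on $h_r'$ at $\varphi=\tfrac{3\pi}{2(r+2)}$. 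You instead run a bifurcation-in-$r$ argument: since $h>0$ at the endpoints and for $r$ near $1$, the first parameter where positivity fails must carry an interior double zero, and solving $h=h'=0$ in closed form forces $\cos\tfrac{3\pi}{r+2}=-\tfrac{r}{2}$; I checked your trigonometry (the squaring-and-adding step giving $\sin^2 b=\tfrac{2-r}{4}$, the evaluation $\cos(2\varphi^{*})=-\tfrac r2$, the elimination $(r+2)\arccos(r/2)=(r-1)\pi$), and the asserted equivalence with \eqref{equal} does hold, most transparently because with $\theta=\tfrac{3\pi}{2(r+2)}$ the left-hand side of \eqref{equal} factors as $-2\cos\theta\left(\cos\tfrac{3\pi}{r+2}+\tfrac r2\right)$ with $\cos\theta>0$ on $1<r<2$; your monotonicity analysis of $f(r)=\cos\tfrac{3\pi}{r+2}+\tfrac r2$ then gives existence and uniqueness of $r_0\in(\tfrac32,2)$ and excludes any double zero for $r<r_0$. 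Two points deserve tightening when writing this up: phrase the continuity step via the first failure point $r_{*}:=\sup\{t\le 2: h(\cdot,s)>0 \text{ on } [0,\pi/2]\ \text{for all } s\in(1,t)\}$ rather than $\sup S$ (so that $S\supset(1,r_0)$ follows without worrying about holes in $S$), and spell out the reduction of $\cos\tfrac{3\pi}{r+2}=-\tfrac r2$ to \eqref{equal}, which needs $(r+2)\theta=\tfrac{3\pi}{2}$ and a triple-angle (or the factorization above), not only the product-to-sum identity. What each approach buys: the paper's proof is elementary and self-contained for each fixed $r<r_0$; yours explains where \eqref{equal} comes from (it is precisely the double-zero condition for the density's derivative, equivalent to the cleaner $\cos\tfrac{3\pi}{r+2}=-\tfrac r2$), identifies $r_0$ as the sharp threshold for monotonicity of $W_{2,r}$, and matches the paper's numerical phase-transition value $r_0(2)=1.6756\ldots$.
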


\begin{proof}
First, we show the existance and uniqueless of a solution $r_0 \in (\frac{3}{2},2)$ of the equation \eqref{equal}. Let $A(r)$ be the function of LHS of the equation \eqref{equal}. It is easy to check that $A(r)>0$ for all $r\in (1,\frac{3}{2})$ and $A(2)<0$. Moreover $A'(r)<0$ for all $\frac{3}{2}<r<2$. Hence there exists a unique solution $r_0\in(\frac{3}{2},2)$ such that $A(r_0)=0$ by the intermediate value theorem.

Assume $1<r<r_0$. We consider a function $g_r(\varphi)$ defined by $g_r(\varphi):=\pi \cdot 2^{2-r} W_{2,r}(\rho(\varphi))$ for $0<\varphi<\frac{\pi}{2}$. Then
\begin{equation*}
g_r'(\varphi)=\frac{r\cos((r+1)\varphi)+2\sin(r\varphi)\sin\varphi}{(\cos\varphi)^{3-r}}
\end{equation*}
It is sufficient to check the positivity of the function $h_r(\varphi):=r\cos((r+1)\varphi)+2\sin(r\varphi)\sin\varphi$ to see the positivity of $g_r'(\varphi)$ since $(\cos\varphi)^{3-r}>0$ for all $0<\varphi<\frac{\pi}{2}$. Since $\sin(r\varphi)\sin(\varphi)>0$ for all $\varphi \in (0,\frac{\pi}{2})$, we have that $h_r(\varphi)>0$ for all $\varphi\in (0, \frac{\pi}{2(r+1)})$. Next we show that $h_r'(\varphi)<0$ for all $\varphi\in(\frac{\pi}{2(r+1)},\frac{3\pi}{2(r+2)})$. To show this, we divide two cases of the region of $\varphi$.\\

Case I $\frac{\pi}{2(r+1)}<\varphi\leq\frac{\pi}{r+1}$: It is clear that
\begin{equation*}
\begin{split}
h_r'(\varphi)&=-r(r+1)\sin((r+1)\varphi)+2r\cos(r\varphi)\sin\varphi+2\sin(r\varphi)\cos\varphi\\
&=-(r-1)(r\sin((r+1)\varphi)+2\sin(r\varphi)\cos\varphi)\\
&<0.
\end{split}
\end{equation*}

Case II $\frac{\pi}{r+1}<\varphi<\frac{3\pi}{2(r+2)}$: Since $A(r)>0$ for all $1<r<r_0$, we have
\begin{equation*}
\begin{split}
h_r'(\varphi)&<-(r-1) \left[ r\sin\left( \frac{3r+3}{2r+4} \pi\right)+2\sin\left(\frac{3r}{2r+4}\pi\right) \cos\left( \frac{3}{2r+4} \pi \right)\right]\\
&=-(r-1)A(r)\\
&<0.
\end{split}
\end{equation*}
Due to the above evaluation, we obtain that $h_r'(\varphi)<0$ for all $\frac{\pi}{2(r+1)}<\varphi < \frac{3\pi}{2(r+2)}$. In addition, for all $\frac{\pi}{r+1}<\varphi<\frac{\pi}{2}$, we have
\begin{equation*}
h_r''(\varphi)=-(r-1)\left[ r(r+1)\cos((r+1)\varphi)+2r\cos(r\varphi)\cos\varphi-2\sin(r\varphi)\sin\varphi\right]>0.
\end{equation*}
Moreover $h_r'(\frac{\pi}{2})=-(r-1)r\sin(\frac{(r+1)\pi}{2})>0$ for all $1<r<r_0$. Hence there exists a unique solution $\varphi_0\in (\frac{3\pi}{2(r+2)},\frac{\pi}{2})$ such that $h_r'(\varphi_0)=0$ by the intermediate value theorem. More strongly, we have that $h_r'(\varphi)<0$ for all $\varphi\in (\frac{3\pi}{2(r+2)},\varphi_0)$ and $h_r'(\varphi)>0$ for all $\varphi\in (\varphi_0,\frac{\pi}{2})$.  The equality $h_r'(\varphi_0)=0$ implies that
\begin{equation*}
\begin{split}
(\cos\varphi_0)h_r(\varphi_0)&=r\cos((r+1)\varphi_0)\cos\varphi_0+2\sin(r\varphi_0)\cos\varphi_0\sin\varphi_0\\
&= r\cos((r+1)\varphi_0)\cos\varphi_0-r\sin((r+1)\varphi_0)\sin\varphi_0\\
&=r\cos((r+2)\varphi_0)>0,
\end{split}
\end{equation*}
since $\frac{3\pi}{2}<(r+2)\varphi_0<\frac{(r+2)\pi}{2}<2\pi$. Thus $h_r(\varphi_0)>0$, and therefore $h_r(\varphi)>0$ for all $\varphi \in (\frac{\pi}{2(r+1)},\frac{\pi}{2})$. Hence $g_r(\varphi)$ is non-decreasing on $(0,\frac{\pi}{2})$. This means that $\mu(2,r)$ is unimodal for all $1<r<r_0$.
\end{proof}


\subsection{Phase transition}
The results of Section~4.5.3 and 4.5.4, as well as some numerical
experiments, suggest,
that for every $p>1$ the Fuss-Catalan distributions admit the following
phase transition:

\begin{con}
For every $p>1$ there exists $r_{0}(p)$, with $p-1<r_0(p)<p$, such that
the Fuss-Catalan distribution $\mu(p,r)$
is unimodal if and only if either $r=p$ or $0<r\leq r_0(p)$.
\end{con}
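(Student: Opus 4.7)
The plan is to translate the unimodality of $\mu(p,r)$ into a sign condition on the logarithmic derivative of its density in the $\varphi$-parametrization of Proposition~\ref{Prop:density}, and then to locate the transition by combining endpoint asymptotics with a continuous-deformation argument in the parameter $r$. For $r<p$ the denominator $\sin^{p-r}(p\varphi)$ in \eqref{eq:propdensitywpr} vanishes as $\varphi\to(\pi/p)^-$, so $W_{p,r}(x)\to+\infty$ as $x\to 0^+$ and any unimodal mode must be at the origin. Hence unimodality of $\mu(p,r)$ is equivalent to $g_{p,r}(\varphi):=\pi W_{p,r}(\rho(\varphi))$ being non-decreasing on $(0,\pi/p)$, and taking the logarithmic derivative this is equivalent to
$$F_{p,r}(\varphi):=(p-r-1)(p-1)\cot((p-1)\varphi)+\cot\varphi+r\cot(r\varphi)-(p-r)p\cot(p\varphi)\ge 0,\quad \varphi\in(0,\pi/p).$$

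First I would carry out the endpoint asymptotics of $F_{p,r}$. The Laurent expansion $\cot x=1/x-x/3+O(x^3)$ at the origin yields $F_{p,r}(\varphi)\sim 1/\varphi\to+\infty$ as $\varphi\to 0^+$, while the singularity of $\cot(p\varphi)$ at $p\varphi=\pi$ forces $F_{p,r}(\varphi)\to +\infty$ as $\varphi\to(\pi/p)^-$. Thus any violation of $F_{p,r}\ge 0$ must occur at an interior minimum that stays in a fixed compact subinterval of $(0,\pi/p)$, uniformly for $r$ in any closed subinterval of $(0,p)$. Now Proposition~\ref{prop:unimodal (p,p-1)} gives $F_{p,p-1}>0$ strictly on $(0,\pi/p)$, and joint continuity of $F_{p,r}$ in $(r,\varphi)$ combined with the uniform compact control propagates the inequality to an $r$-neighborhood of $p-1$, hence $r_0(p)>p-1$. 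In the opposite direction, Proposition~\ref{prop:unimodal p,p} shows that $g_{p,p}$ has a strict interior maximum at some $\varphi^\ast\in(0,\pi/p)$, while the formula gives $g_{p,r}\to g_{p,p}$ uniformly on compacta of $(0,\pi/p)$ as $r\to p^-$, yet $g_{p,r}(\varphi)\to +\infty$ at $\varphi=\pi/p$ for every $r<p$. Consequently $g_{p,r}$ must develop a local maximum near $\varphi^\ast$ and then climb back to $+\infty$ at the right endpoint, precluding monotonicity. This yields non-unimodality for all $r$ sufficiently close to $p$, and setting
$$r_0(p):=\sup\{r\in[p-1,p):F_{p,r'}\ge 0 \text{ on } (0,\pi/p)\ \text{for all}\ r'\in[p-1,r]\}$$
places $r_0(p)\in(p-1,p)$.

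The hard part is two-fold. First, unimodality must be established on the full range $0<r\le p-1$ (the present paper proves it only at the endpoint $r=p-1$): for such $r$ the exponent $p-r-1$ on $\sin((p-1)\varphi)$ is non-negative, so direct estimation of $F_{p,r}$ combined with the stable limit $\mu(p,r)\to\delta_0$ as $r\to 0^+$ should work, though a uniform treatment may require nontrivial trigonometric identities. Second, and conceptually the main obstacle, one must prove that $U(p):=\{r\in(0,p):\mu(p,r)\text{ is unimodal}\}$ is an initial interval, so that $r_0(p)$ is a genuine threshold. A natural strategy is to show $\partial_r F_{p,r}(\varphi)<0$ at every interior local minimum of $F_{p,r}(\cdot)$, which would force the minimum value to cross zero exactly once as $r$ increases, and would characterize $r_0(p)$ as the unique pair $(r_0(p),\varphi_0)$ solving $F_{p,r}(\varphi)=0=\partial_\varphi F_{p,r}(\varphi)$, in analogy with equation \eqref{equal} obtained for $p=2$ in Proposition~\ref{prop:unimodal 2,r}. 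The technical difficulty is that $\partial_r F_{p,r}$ is itself an indefinite sum of cotangent-type terms, so establishing its sign at the relevant critical points seems to require a clever parametric argument or an integral representation yet to be found.
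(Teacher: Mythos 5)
The statement you are trying to prove is presented in the paper as a \emph{conjecture}: the authors establish unimodality only for $\mu(p,p)$, $\mu(p,p-1)$, $\mu(2r,r)$ and $\mu(2,r)$ with $1<r<r_0(2)$, and support the general phase-transition picture by numerical experiments. So there is no proof in the paper to compare against, and your text must be judged as an attempted proof of an open problem.

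Judged that way, it has genuine gaps, which you yourself flag. The parts you actually carry out are essentially sound: the reduction, for $r<p$, of unimodality to $F_{p,r}\ge 0$ on $(0,\pi/p)$ (using that $W_{p,r}(x)\to+\infty$ as $x\to0^+$ forces the mode to be $0$, and that $\rho$ is decreasing); the divergence of $F_{p,r}$ at both endpoints, uniformly for $r$ in compact subsets of $(0,p)$, which together with the strict positivity in Proposition~\ref{prop:unimodal (p,p-1)} gives unimodality for $r$ in a one-sided neighborhood of $p-1$; and the non-unimodality for $r$ close to $p$, obtained by comparing $g_{p,r}$ with $g_{p,p}$ (which vanishes at $\varphi=\pi/p$) while $g_{p,r}$ blows up there. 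But these are only endpoint statements. The actual content of the conjecture is precisely the two items you defer: (a) unimodality for \emph{all} $0<r\le p-1$, which the paper knows only at $r=p-1$, at $r=p/2$, and for $p=2$, $r<r_0(2)$; and (b) that the set of $r\in(0,p)$ giving unimodality is an initial interval, i.e.\ that unimodality, once lost, is not regained before $r=p$. Without (b), your supremum definition of $r_0(p)$ (which moreover only scans $r\ge p-1$) does not yield the ``only if'' direction of the statement. Your proposed mechanism for (b) --- that $\partial_r F_{p,r}<0$ at every interior minimum of $F_{p,r}(\cdot)$, so the minimum value crosses zero once --- is offered without any argument, and since $\partial_r F_{p,r}$ is an indefinite combination of cotangent-type terms there is no evident reason it holds; the analogy with equation \eqref{equal} for $p=2$ is suggestive but is itself only a sufficient criterion in Proposition~\ref{prop:unimodal 2,r}, not a characterization of the threshold. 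In short, the proposal is a reasonable strategy outline plus correct boundary observations, but the conjecture remains unproved by it.
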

 As an example we present  on Figure~1 graphs of $W_{2,r}(x)$ for
$r=1.5,\,1.6,\,1.7,\,1.8,\,1.9,\,2$,
the left parts of the graphs appear in this order from the top to the bottom.
Note that if $0<r<2$ then $\lim_{x\to 0^-}W_{2,r}(x)=+\infty$ and
$\lim_{x\to0^-}W_{2,2}(x)=0$.
We see that $W_{2,1.5}(x),\, W_{2,1.6}(x)$ are unimodal,
$W_{2,1.7}(x),\,W_{2,1.8}(x),\,W_{2,1.9}(x)$
are not unimodal and $W_{2,2}(x)$ is again unimodal. We have found
numerically that
for $p=2$ the phase transition is at $r_0(2)=1.6756\ldots$.
\begin{figure}[htbp]
 \centering
  \includegraphics[width=100mm]{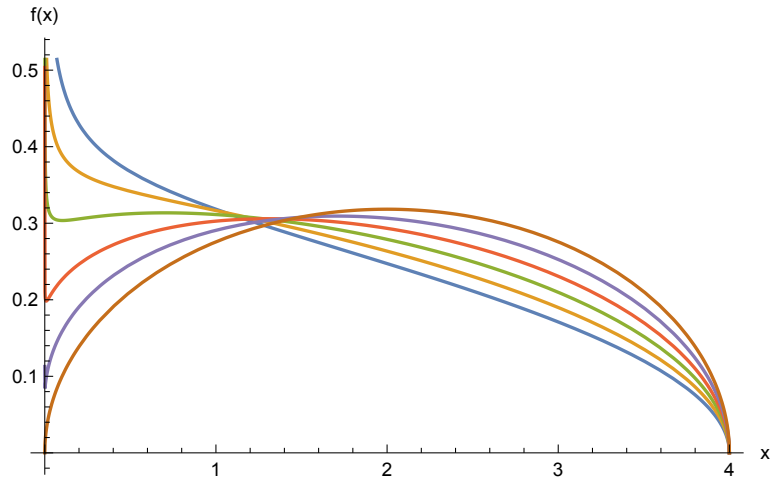}
 \caption{Density function $W_{2,r}$ for $r=1.5,\,1.6,\,1.7,\,1.8,\,1.9,\,2$}
 \label{fig:one}
\end{figure}

\newpage
On Figure~2 we represent the Fuss-Catalan distributions $\mu(p,r)$,
$p\geq1$, $0<r\leq p$, on the $(p,r)$-plane.
The middle thick blue line represents $r_0(p)$ which was found
experimentally.
Top thick red line segment corresponds to the freely self-decomposable
distributions $\mu(p,r)$ and green area corresponds free regular
infinitely divisible distributions $\mu(p,r)$. The union of the red and
green areas corresponds to the freely infinitely divisible Fuss-Catalan
distributions.

\begin{figure}[htbp]
 \centering
  \includegraphics[width=100mm]{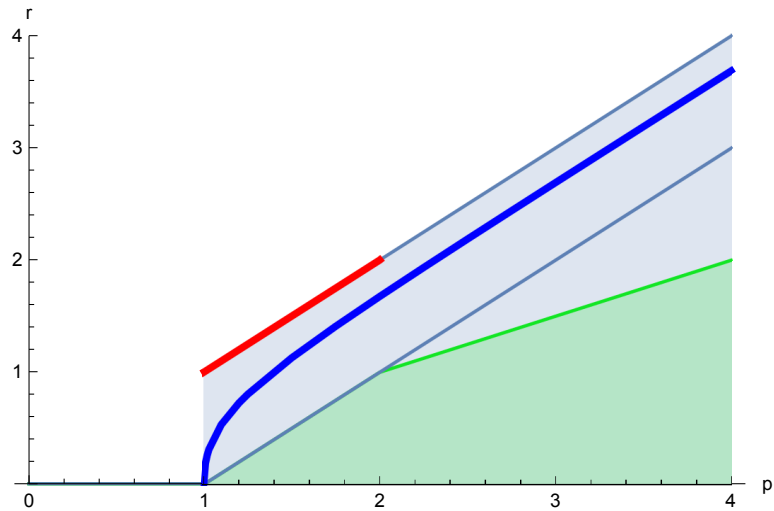}
 \caption{Phase transition}
 \label{fig:one}
\end{figure}


\end{document}